\documentclass
[reqno]{amsart}
\usepackage[utf8]{inputenc}
\usepackage{amssymb,amscd}
\usepackage{thmtools}

\usepackage[textsize=tiny]{todonotes}
\usepackage{hyperref}
\usepackage[shortlabels]{enumitem}
\usepackage{float} 
\usepackage{cleveref}

\newtheorem{definition}{Definition}

\subjclass[2020]{primary: 14N07, 15A72 
}


\setcounter{secnumdepth}{1} 


\newtheoremstyle{alstandard}{7pt}{3pt}{\rm}{}{\scshape}{:}{0.5em}{}
\theoremstyle{alstandard}
\swapnumbers
\declaretheorem[name=Theorem]{theorem}
\numberwithin{theorem}{section}
\declaretheorem[sibling=theorem, name=Lemma]{lemma}
\declaretheorem[sibling=theorem, name=Proposition]{prop}

\declaretheorem[sibling=theorem, name=Definition]{defi}

\declaretheorem[sibling=theorem, name=Corollary]{cor}
\declaretheorem[sibling=theorem, name=Remark]{remark}
\declaretheorem[sibling=theorem, name=Notation]{notation}

\declaretheorem[sibling=theorem, name=Conjecture]{conjecture}


\newcommand{\C}{\mathbb{C}}
\newcommand{\N}{\mathbb{N}}

\renewcommand\P{\mathbb P}

\newcommand{\dual}{\vee}


\DeclareMathOperator{\rk}{rank}

\DeclareMathOperator{\GM}{GM}

\DeclareMathOperator{\grass}{Gr}
\DeclareMathOperator{\chow}{Chow}
\DeclareMathOperator{\segreveronese}{s\nu}
\DeclareMathOperator{\apex}{ap}
\DeclareMathOperator{\smooth}{reg}



\DeclareMathOperator{\GL}{GL}
\DeclareMathOperator{\SO}{SO}
\DeclareMathOperator{\Sp}{Sp}


\usepackage{pifont}
%
%


\begin{document}
	\title[Invariant Secant Varieties]{Nondefectivity of invariant secant varieties} 
	
	\author[Blomenhofer]{Alexander Taveira Blomenhofer}
	\address{Centrum Wiskunde en Informatica (CWI), Science Park 123, 
		1098XG Amsterdam, The Netherlands}
	\email{atb@cwi.nl}
	
	\author[Casarotti]{Alex Casarotti}
	\address{Università di Bologna, Piazza di Porta S. Donato 5, (40126) Bologna, Italy}
	\email{alex.casarotti@unibo.it}

	\begin{abstract} 
		We show that a large class of secant varieties is nondefective. In particular, we positively resolve most cases of the Baur-Draisma-de Graaf conjecture on Grassmannian secants in large dimensions. Our result improves the known bounds on nondefectivity for various other secant varieties, including Chow varieties, Segre-Veronese varieties and Gaussian moment varieties. We also give bounds for identifiability and the generic ranks. 
	\end{abstract}

	\maketitle

	\raggedbottom
\section{Introduction}\label{sec:intro}

Our starting point is the empirical observation that many secant varieties behave highly ``regular'' with respect to dimension. If one picks a ``reasonable'' base variety $ V $ of small dimension $ N $ in a space $ S^d(\C^n) $ of polynomials, then very often, the first secant varieties $ \sigma_m(V) $ will have the \emph{expected dimension} for small $ m $, which, in affine notation, equals $ mN $. 

Conversely, for large $ m $, the secants of a nondegenerate variety $ V $ will fill up the entire space $ S^d(\C^n) $. The smallest $ m $ such that the $ m $-th secant fills the whole space is called the \emph{generic $ V $-rank}, denoted $ m^{\circ} $. Frequently, the lower bound
\begin{align}
	m^{\circ}\ge \frac{\dim S^d(\C^n)}{\dim V}, 
\end{align}
obtained from counting parameters, is a very close estimate for the true value of $ m^{\circ} $. One of the first proven results of this type was the Alexander-Hirschowitz theorem \cite{hirschowitz1995polynomial}, which concerned decompositions
\begin{align}\label{eq:1-Waring-dec}
	f = \sum_{i = 1}^{m} \ell_i^d
\end{align}
of a $ d $-form $ f $ in $ n $ variables as a sum of $ d $-th powers of linear forms $ \ell_{1},\ldots,\ell_{m} $. Decompositions like \Cref{eq:1-Waring-dec} are called $ 1 $-Waring decompositions. The value $ m $ is called the $ 1 $-Waring \emph{rank} of the decomposition. In this example, $  V = \nu_d(\C^n) $ is the degree-$ d $ Veronese embedding of $ \C^n $. Any form $ f $ with a decomposition like \Cref{eq:1-Waring-dec} lies in the $ m $-th secant variety $ \sigma_m (V) $. The Alexander-Hirschowitz theorem states that for all but finitely many $ (n, d) $ with $ n\ge 2, d\ge 3 $, the generic $1$-Waring rank is the rounded-up value of $ \frac{\dim S^d(\C^n)}{\dim V} $, and that most secants of rank smaller than $ m^{\circ} $ have the expected dimension. 

If the linear forms $ \ell_i $ are replaced by forms $ q_i $ of higher degree $ k $, then one obtains the notion of a $ k $-Waring decomposition 
\begin{align}\label{eq:k-Waring-dec}
	f = \sum_{i = 1}^{m} q_i^d
\end{align}
of a form $ f $ of degree $ kd $. Those higher-order Waring decompositions can be traced back a long time. E.g., Ramanujan \cite{ramanujan1913problem} posed a famous problem about sums of cubes of quadratics in 1913. In recent times, the study of Waring decomposition was fueled by connections to Machine Learning \cite{Bafna_Hsieh_Kothari_Xu_2022} and circuit complexity \cite{Garg_Kayal_Saha_2020}. 
Defectivity and identifiability properties with respect to $k$-Waring problem have been treated in  \cite{Casarotti_Postinghel_2023}.

Yet another type of decompositions within $ S^{d}(\C^n) $ are Chow decompositions. Here, one aims to write a $ d $-form 
\begin{align}
	f = \sum_{i = 1}^{m} \ell_{i, 1}\cdot \ldots \cdot \ell_{i, d}
\end{align}
as a sum of $ d $-fold products of linear forms $ \ell_{i, j} $. All of these decompositions have one thing in common: They start with an irreducible, affine cone $ V \subseteq S^{d}(\C^n) $ that is closed under the action of $ \GL(\C^n) $. Subsequently, they try to write an element $ f $ of $ S^{d}(\C^n) $ as a sum of finitely many elements of $ V $.

Therefore, one may produce more problems of this type, by continuing the list of $ \GL $-invariant subvarieties of $ S^{d}(\C^n) $. Machine Learning motivates to look at \emph{Gaussian moment varieties}, since their secants are important to Gaussian mixture models.

A very similar behaviour is encountered, if instead of the symmetric powers, one looks at canonical subvarieties of the alternating powers $ \bigwedge^{d}(\C^n) $. The most natural choice is the Grassmannian variety $ \grass(n, d) $ of $ d $-dimensional subspaces of $ \C^n $. It may be embedded into $ \P(\bigwedge^{d}(\C^n)) $ via the \emph{Plücker map}
\begin{align}
	\langle u_1,\ldots,u_d \rangle \mapsto [u_1 \wedge \ldots \wedge u_d],
\end{align}  
which sends a basis $ u_1,\ldots,u_d $ of the space to the projective equivalence class of its wedge product. Secant varieties of Grassmannians are of significance to coding theory, as outlined by Baur, Draisma and de Graaf \cite{Baur_Draisma_DeGraaf_2007}. 

\subsection{Contributions} We give a unified treatment and bounds for nondefectivity and the generic rank in all the cases mentioned above. For Grassmannians, Chow varieties, Gaussian moment and Segre-Veronese varieties, our new bounds improve significantly on the best known ones and are asymptotically optimal for large $ n $. The common structure in the examples above is the presence of a group action, e.g., $ G = \GL(\C^n) $, and of an irreducible $ G $-module $ \mathcal{L} $, e.g., $ \mathcal{L} = S^d(\C^n) $ or $ \mathcal{L} = \bigwedge^{d}(\C^n) $. We leverage invariant-theoretic techniques to show that all of the above-mentioned varieties, and many more, will have most of their secant dimensions as expected. 
Our main result is \Cref{thm:stationarity-lemma}, but the applications all follow from its Corollary,  \Cref{thm:introduction-main-result}, which we will state here and prove in \Cref{cor:generalized-nenashev-nondefectivity}. 
\begin{theorem}
	\label{thm:introduction-main-result}
	Let $ G $ be a group and $ V $ an $ N $-dimensional, irreducible affine cone in an irreducible $ G $-module $ \mathcal{L} $. If $ V $ is closed under the $ G $-action, then it holds that 
	\begin{enumerate}
		\item $ V $ is $ m $-nondefective for all $  m\leq \frac{\dim \mathcal{L}}{N} - N $.
		\item The generic rank (with respect to $ V $) is between $ \frac{\dim \mathcal{L}}{N} $ and $\frac{\dim \mathcal{L}}{N} + N$. 
	\end{enumerate}
\end{theorem}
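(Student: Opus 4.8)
The plan is to translate everything into tangent-space language via Terracini's lemma and then exploit the two structural hypotheses—that $\mathcal{L}$ is irreducible and that $V$ is a $G$-stable cone—through the dual, critical-point (``stationarity'') picture underlying \Cref{thm:stationarity-lemma}. By Terracini, for generic $v_1,\dots,v_m\in V$ one has
\begin{align}
	\dim\sigma_m(V)=\dim\Big(\textstyle\sum_{i=1}^m T_{v_i}V\Big),
\end{align}
so, writing $W=\sum_i T_{v_i}V$, part (1) amounts to $\dim W=mN$ in the range $mN\le \dim\mathcal{L}-N^2$, while part (2) asks that the least $m$ with $W=\mathcal{L}$ lie in $[\dim\mathcal{L}/N,\ \dim\mathcal{L}/N+N]$. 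The lower bound in (2) is the bare parameter count $\dim\sigma_m(V)\le mN$, so only the two upper estimates require work.

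Next I would pass to the dual module $\mathcal{L}^\dual$, which is again irreducible, and identify $W^\perp=\bigcap_i (T_{v_i}V)^\perp$. Because $V$ is a cone, $\varphi\in (T_vV)^\perp$ says exactly that $v$ is a \emph{stationary} point of the linear form $\varphi|_V$; thus $\dim W^\perp$ counts the functionals for which all of $v_1,\dots,v_m$ are simultaneously critical, and nondefectivity becomes the independence of these $m$ stationarity conditions, each of codimension $N$. Irreducibility enters first as a spanning statement: the span of $\{T_vV:v\in V\}$ is $G$-stable and nonzero, hence equals $\mathcal{L}$, so no nonzero $\varphi$ can be stationary at \emph{every} point of $V$. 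This already yields the crude fact that $\dim W$ strictly increases with each sufficiently general new point until it reaches $\dim\mathcal{L}$.

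The engine upgrading this to growth by the \emph{full} $N$ is \Cref{thm:stationarity-lemma}: the Gauss image $\{T_vV\}\subseteq\grass(N,\mathcal{L})$ is $G$-stable, so a generic tangent space is spread out enough that, while there is room, $\dim(T_{v}V\cap W)=0$ for generic $v$ and $\dim W$ jumps by $N$. I would run this inductively from $W=0$: each step contributes a fresh $N$-dimensional summand as long as $\dim W\le \dim\mathcal{L}-N^2$, which gives part (1) directly, since $m=\lfloor \dim\mathcal{L}/N-N\rfloor$ steps then land at $\dim W=mN$. For the upper bound in (2) I would continue past this threshold: at that point the residual codimension $\dim\mathcal{L}-\dim W$ is at most $N^2$, and the same group-controlled stationarity mechanism clears this gap after at most a further $2N$ general points, placing $m^\circ$ below $\dim\mathcal{L}/N+N$.

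The main obstacle, and the part I expect to delegate to \Cref{thm:stationarity-lemma}, is precisely the transition regime where $\dim W$ is within $N^2$ of $\dim\mathcal{L}$. Here the smallness of the Gauss image—for a cone it is constant along rulings, hence of dimension $<N$, far below $\dim\grass(N,\mathcal{L})=N(\dim\mathcal{L}-N)$—means a generic tangent $N$-plane is \emph{not} a generic $N$-plane, so the clean Schubert count ``two subspaces meet iff their dimensions overshoot $\dim\mathcal{L}$'' can fail and defectivity could in principle creep in. Quantifying that the $G$-action keeps the tangent planes in sufficiently general position until only an $N^2$-dimensional gap remains is exactly what produces the $-N$ in (1) and the $+N$ in (2); I expect this to rest on the absence of small $G$-invariant subspaces in $\mathcal{L}$ together with a dimension count for the incidence variety $\{(v,\varphi):\varphi|_{T_vV}=0\}$ of stationary pairs, whose total dimension equals $\dim\mathcal{L}$ and whose fibers record the critical loci of the $\varphi$.
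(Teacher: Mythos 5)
Your proposal is correct and takes essentially the same route as the paper's own proof (\Cref{cor:generalized-nenashev-nondefectivity}): reduce to spans of generic tangent spaces via Terracini, apply \Cref{thm:stationarity-lemma} with $T=U$ the Gauss map to bound the length of the ``bad window'' $\{m_0+1,\dots,m_1\}$ by $N$, and locate that window by the parameter counts $N(m_0+1)\leq\dim\mathcal{L}$ and $Nm_1\geq\dim\mathcal{L}$, which is exactly what your inductive bookkeeping (direct sums while $\dim W\leq\dim\mathcal{L}-N^2$, then at most $2N$ further points) reproduces. Your dual/critical-point framing and the closing incidence-variety speculation are inessential repackaging --- and, incidentally, not how the paper proves the stationarity lemma, which instead uses an exchange argument together with $G$-invariance of the apex space --- but since you only use that lemma as a black box, this does not affect the validity of your argument.
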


Despite its generality, \Cref{thm:introduction-main-result} gives better bounds than quite a few specialized results for specific varieties.   
In \Cref{sec:applications}, we list cases where our new result improves on the currently best-known bounds for nondefectivity, identifiability and/or the generic rank. In many of those cases, our bounds are asymptotically optimal for large $ n $. In particular, we prove most cases of the Baur-Draisma-de Graaf conjecture, stated in \Cref{conj:baur-draisma-degraaf}. 

\subsection{Organization} We prove our main result in \Cref{sec:stationarity-lemma}. In \Cref{sec:applications}, we apply it to a plethora of different varieties, where we can obtain improved bounds on nondefectivity and the generic rank.

\subsection{Techniques and Acknowledgments}

In the 1980's, Ådlandsvik examined varieties, which have a large number of defective secant varieties, before they become (projectively) a cone. In 2015, Nenashev resolved a large number of cases of Fröberg's conjecture for forms of equal degree via a very short, clean and elegant argument. Both independently made observations, which are special cases of our main technical result, \Cref{lem:stationarity-lemma}. In this work, we present a generalization of both.

We are grateful to Edoardo Ballico for pointing us towards the work of Bjørn Ådlandsvik, and to Vincenzo Galgano, who suggested the application to spinor varieties. We also wish to thank Alessandro Oneto for inviting the authors to the University of Trento and Nick Vannieuwenhoven for telling us about the (non-)weak defectivity of Chow varieties.

\section{Preliminaries}

Let us briefly recall the main notions used in this paper. Most of our varieties will be affine cones $ V $ in a linear space $ \mathcal{L} $, i.e. $\mathbb{C} \cdot V \subseteq V$. The corresponding projective varieties we denote by $ \P(V) \subseteq \P(\mathcal{L}) $. Frequently, we will also have a group $ G $ acting on the space $ \mathcal{L} $, giving it the structure of a $ G $-module. 
We say that a variety $ V $ is $ G $-\emph{invariant}, if for all $ v\in V $ and $ g\in G $, it holds $ gv\in V $. We use two different notions of irreducibility, that are not to be confused: A variety is called irreducible, if it cannot be covered by the union of two proper subvarieties. A $ G $-module $ \mathcal{L} $ is called irreducible, if $ \mathcal{L} $ is not the zero module and $ \mathcal{L} $ does not contain any proper nonzero $ G $-submodules. The set of smooth points of a variety $ V $ is denoted $ V_{\smooth} $. For every smooth point $x \in V$ we denote by $T_x V$ the tangent space to $V$ at $x$. If $x_1,\dots,x_m$ are smooth points of $V$ we denote by $\langle T_{x_1}V_1,\ldots, T_{x_m} V_m  \rangle$ the linear span of the corresponding tangent spaces.  The space of $ d $-forms on a vector space $ \mathcal{L} $ is denoted $ S^d(\mathcal{L}) $. A special case is the dual space $ \mathcal{L}^{\dual} = S^1(\mathcal{L}) $.

\begin{definition}
	The \emph{$m$-th secant variety} $\sigma_m V$ of the affine cone $V$ is the Zariski closure of 
	\begin{align}
		\{x_1 + \ldots + x_m \mid x_1,\ldots,x_m \in V\}. 
	\end{align}
\end{definition}
The $m$-th secant variety has the \emph{expected dimension} $e_m(V):=\min\{mN, \dim \mathcal{L}\}$. If the dimension of $ \sigma_m V $ is smaller than expected, we say that $V$ is $m$-\emph{defective}. Otherwise, $ V $ is called $ m $-\emph{nondefective}. We say that $V$ is $m$-\textit{identifiable}, if the general point $ p\in \sigma_m V$ has, up to permutation, a unique representation $ p = x_1 + \ldots + x_m $, where $ x_1,\ldots,x_m \in V $. 
Secant varieties are a special case of joins: The \emph{join} of affine cones $ V_1,\ldots,V_m $ is defined as the closure of $ \{x_1 + \ldots + x_m \mid x_1\in V_1,\ldots,x_m \in V_m\} $. We denote it by $ J(V_1,\ldots,V_m) $. 

The main tool in order to compute the dimension of joins and secant varieties dates back to Terracini \cite{Te12} and can be summarized as follows:

\begin{lemma}[{Terracini, see \cite{Te12}}]
	For general $ x_1 \in V_1,\ldots,x_m \in V_m $ and general $ z\in \langle x_1,\ldots,x_m \rangle $, the tangent space to the join $ J(V_1,\ldots,V_m) $ at  $ z $ equals $\langle T_{x_1}V_1,\ldots, T_{x_m} V_m  \rangle$. 
	
	Moreover, for smooth points $ x_1 \in V_1,\ldots,x_m \in V_m $, and any $ z\in \langle x_1,\ldots,x_m \rangle $, $\langle T_{x_1}V,\ldots, T_{x_m} V  \rangle$ is still contained in the tangent to $ J(V_1,\ldots,V_m)$ at $ z $. 
\end{lemma}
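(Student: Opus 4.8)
The plan is to realize the join $J = J(V_1,\ldots,V_m)$ as the closure of the image of the \emph{addition map}
\begin{equation*}
	\phi\colon V_1 \times \cdots \times V_m \to \mathcal{L}, \qquad (x_1,\ldots,x_m) \mapsto x_1 + \cdots + x_m,
\end{equation*}
and to read the tangent space of $J$ off the differential of $\phi$. Since $\phi$ is just the restriction to $V_1 \times \cdots \times V_m$ of the linear addition map $\mathcal{L}^m \to \mathcal{L}$, at a point $(x_1,\ldots,x_m)$ whose entries are smooth points of the respective $V_i$ its differential is
\begin{equation*}
	d\phi_{(x_1,\ldots,x_m)}\colon T_{x_1}V_1 \times \cdots \times T_{x_m}V_m \to \mathcal{L}, \qquad (v_1,\ldots,v_m) \mapsto v_1 + \cdots + v_m.
\end{equation*}
Because each $V_i$ is a cone, the embedded tangent space $T_{x_i}V_i$ is a linear subspace of $\mathcal{L}$ (it contains $x_i$ by Euler's relation), so the image of this differential is precisely the span $\langle T_{x_1}V_1,\ldots,T_{x_m}V_m\rangle$. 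Here I assume the $V_i$ irreducible, as in the applications, so that $V_1\times\cdots\times V_m$ is irreducible and ``general'' makes sense.

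For the containment in the ``moreover'' part I would invoke the general principle that a morphism sends tangent vectors to tangent vectors of (the closure of) its image: as $\phi$ maps into $J$, one gets $\im d\phi_{(x_1,\ldots,x_m)} \subseteq T_{z_0}J$ for $z_0 = x_1 + \cdots + x_m$, where $T_{z_0}J$ is read as the Zariski tangent space (all one can hope for when $z_0$ is a singular point of $J$). The cone structure then upgrades this from the single point $z_0$ to a general point $z = \sum_i \lambda_i x_i$ of the span: replacing $x_i$ by $\lambda_i x_i$ does not change the tangent space, $T_{\lambda_i x_i}V_i = T_{x_i}V_i$, while $z = \phi(\lambda_1 x_1,\ldots,\lambda_m x_m)$, which gives $\langle T_{x_1}V_1,\ldots,T_{x_m}V_m\rangle \subseteq T_z J$.

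For the equality in the first part I would invoke generic smoothness. Working over $\C$, the dominant morphism $\phi$ onto $J$ is smooth over a dense open subset of $J$, so for general $(x_1,\ldots,x_m)$ the differential $d\phi_{(x_1,\ldots,x_m)}$ is \emph{surjective} onto $T_z J$, where $z = \sum_i x_i$ is then a general, hence smooth, point of $J$. Since the image of this differential is $\langle T_{x_1}V_1,\ldots,T_{x_m}V_m\rangle$, surjectivity together with the containment above yields the equality $\langle T_{x_1}V_1,\ldots,T_{x_m}V_m\rangle = T_z J$; once more the cone structure lets me pass from the specific sum to a general $z$ in the span.

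The routine inputs are standard: that the embedded tangent space of a cone at a smooth point is a linear subspace, and that a morphism's differential lands in the tangent space of its image. The one genuine subtlety, and the step I would treat most carefully, is the passage from the inclusion $\langle T_{x_i}V_i\rangle \subseteq T_z J$ to equality at a general point. This rests on generic smoothness in characteristic $0$ to guarantee that $d\phi$ is actually surjective onto $T_z J$ for general parameters, rather than having generic rank strictly below $\dim J$; it is exactly here that working over $\C$ (as opposed to positive characteristic, where a dominant map can be everywhere ramified) is essential.
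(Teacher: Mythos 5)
The paper itself does not prove this lemma at all — it is quoted as a classical result with a citation to Terracini — so your attempt can only be measured against the standard argument, and that is essentially what you give: realize the join $J$ as the closure of the image of the addition map $\phi$, identify $\im d\phi_{(x_1,\ldots,x_m)}$ with $\langle T_{x_1}V_1,\ldots,T_{x_m}V_m\rangle$ (using that embedded tangent spaces of cones at smooth points are linear subspaces), obtain the containment from functoriality of tangent spaces, and obtain equality at a general point from generic smoothness in characteristic $0$. That skeleton is correct, including your observation that characteristic $0$ is exactly where the equality step lives.

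There is, however, one genuine gap, in the ``moreover'' clause. The lemma asserts $\langle T_{x_1}V_1,\ldots,T_{x_m}V_m\rangle \subseteq T_z J$ for \emph{any} $z \in \langle x_1,\ldots,x_m\rangle$, whereas your scaling argument only reaches points $z = \sum_i \lambda_i x_i$ with \emph{all} $\lambda_i \neq 0$: the identity $T_{\lambda_i x_i}V_i = T_{x_i}V_i$ fails for $\lambda_i = 0$, since the origin is in general a singular point of the cone $V_i$, and the tuple $(\lambda_1 x_1,\ldots,\lambda_m x_m)$ then need not even lie in the smooth locus where you computed $d\phi$. So degenerate combinations — e.g.\ $z = x_1$, or any $z$ in the span of a proper subset of the points — are not covered. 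This is not a cosmetic omission for this paper: in the proof of \Cref{prop:apex-space}, the containment is applied to the join $J(\sigma_m V, \langle p \rangle)$ at the point $z = x_1 + \cdots + x_m$, which has coefficient $0$ on the factor $\langle p \rangle$, so precisely the case your argument misses is the one being used. The repair is short: for a fixed vector $v \in \langle T_{x_1}V_1,\ldots,T_{x_m}V_m\rangle$, the locus $\{z \in \mathcal{L} : v \in T_z J\}$ is Zariski closed, being cut out by the equations $\sum_j v_j\, \partial_j f(z) = 0$ for $f$ ranging over the ideal of $J$; it contains the subset of $\langle x_1,\ldots,x_m\rangle$ where all coefficients are nonzero, which is dense in that linear space, hence it contains all of $\langle x_1,\ldots,x_m\rangle$. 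With this closedness observation added (and the equally standard incidence argument guaranteeing that a general point of the span of general points is a smooth point of $J$, which you glossed over in the equality step), your proof is complete.
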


\begin{prop}\label{prop:apex-space}
	The \emph{apex space} of an (irreducible) affine cone $ V $ is defined as
	\begin{align}
		\apex(V) = \{x\in V \mid V+x = V\}.
	\end{align}
	For brevity, we also denote $ \apex_m (V) := \apex(\sigma_mV) $. It holds that 
	\begin{align}
		 \apex_m(V) = \bigcap_{(x_1,\ldots,x_m) \in \mathcal{U}_{m, V} } \langle T_{x_1} V,\ldots, T_{x_m} V \rangle, 
	\end{align}
	where $ \mathcal{U}_{V, m} $ denotes the set 
	\begin{align}
		 \{x\in V_{\mathrm{reg}}^m \mid \langle T_{x_1} V,\ldots, T_{x_m} V \rangle = T_{x_1 + \ldots + x_m} \sigma_m V \text{ has generic dimension} \}.
	\end{align}
\end{prop}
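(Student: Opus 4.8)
The plan is to set $W := \sigma_m V$ and establish the claimed equality $\apex(W) = \apex_m(V)$ by proving two inclusions against the right-hand side, after first rewriting that side as an intersection of tangent spaces over a dense family of smooth points. I would begin by recording that $\apex(W)$ is a \emph{linear} subspace: it contains $0$ and is closed under addition (if $W+x=W$ and $W+y=W$ then $W+(x+y)=(W+x)+y=W$), while closure under scalars uses that $W$ is a cone, since for $\lambda\neq 0$ one has $W+\lambda x=\lambda(W+x)=\lambda W=W$ whenever $W+x=W$. Next, by definition of $\mathcal{U}_{V,m}$ together with Terracini's lemma, for every tuple $(x_1,\ldots,x_m)\in\mathcal{U}_{V,m}$ the span $\langle T_{x_1}V,\ldots,T_{x_m}V\rangle$ equals $T_z W$ at $z=x_1+\ldots+x_m$, which is then a smooth point with $\dim T_z W=\dim W$. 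Since the maximal-rank condition is open and $V_{\smooth}^m$ is irreducible, $\mathcal{U}_{V,m}$ is a dense open subset of $V_{\smooth}^m$, and because the summation map $(x_1,\ldots,x_m)\mapsto\sum_i x_i$ is dominant onto $W$, its restriction to the dense set $\mathcal{U}_{V,m}$ still has image dense in $W$. Thus the right-hand side equals $\bigcap_{z}T_z W$, the intersection over a dense family of smooth points of $W$.

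For the inclusion ``$\subseteq$'', suppose $x\in\apex(W)$. Since $\apex(W)$ is linear we get $W+tx=W$ for all $t\in\C$, so for any smooth $z\in W$ the entire line $\{z+tx\}$ lies in $W$; differentiating at $t=0$ yields $x\in T_z W$. Hence $x$ lies in every tangent space on the right-hand side, giving $\apex_m(V)\subseteq\bigcap_{(x_1,\ldots,x_m)\in\mathcal{U}_{V,m}}\langle T_{x_1}V,\ldots,T_{x_m}V\rangle$.

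For the reverse inclusion, which is the crux, I would pass to the vanishing ideal $I(W)$ and argue with directional derivatives. If $x\in T_z W$ for all $z$ in the dense family above, then for each $f\in I(W)$ the polynomial $D_x f$ (the directional derivative in direction $x$) vanishes on a dense subset of $W$, hence $D_x f\in I(W)$ because $I(W)$ is radical. Iterating shows $D_x^k f\in I(W)$ for all $k\geq 0$, so the finite Taylor expansion $f(w+tx)=\sum_{k\geq 0}\tfrac{t^k}{k!}(D_x^k f)(w)$ vanishes identically for $w\in W$. As this holds for every $f\in I(W)$, we obtain $W+tx\subseteq W$ for all $t$, and by symmetry $W+\C x=W$, i.e. $x\in\apex(W)$.

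The step I expect to be the main obstacle is the bookkeeping underlying the last paragraph: one must verify that the tuples in $\mathcal{U}_{V,m}$ genuinely sweep out a dense subset of the smooth locus of $W$ (so that pointwise vanishing of $D_x f$ on these points upgrades to membership in the radical ideal), and that Terracini's identification of the span with $T_z W$ is valid at each such $z$. Once density, dominance of the summation map, and the cone structure of $W$ are in place, the observation that $x$ lies in the generic tangent space is equivalent to $D_x$ stabilizing $I(W)$, and this derivative-stable-ideal argument closes both inclusions uniformly.
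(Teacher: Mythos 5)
Your proof is correct, but it takes a genuinely different route from the paper's. The paper handles both inclusions by applying Terracini's lemma to the join $J(\sigma_m V, \langle p \rangle)$: for the inclusion of $\apex_m(V)$ into the intersection, membership in the apex forces this join to equal $\sigma_m V$, and the containment half of Terracini then places $p$ inside each span $\langle T_{x_1}V,\ldots,T_{x_m}V\rangle$; for the reverse inclusion, Terracini at a general tuple shows that $J(\sigma_m V,\langle p\rangle)$ and $\sigma_m V$ have the same dimension, and irreducibility forces them to coincide. You eliminate joins entirely: you identify the right-hand side with an intersection of tangent spaces $T_z \sigma_m V$ over a dense family of smooth points $z$ (legitimate, since equality of the span with $T_z\sigma_m V$ is built into the definition of $\mathcal{U}_{V,m}$, so Terracini is needed only to guarantee that this set is nonempty, open and dense, and that the summation map sends it onto a dense subset of $\sigma_m V$), then prove one inclusion by linearity of the apex and differentiation of the lines $z+tx$, and the other by showing that the directional derivative $D_x$ stabilizes the vanishing ideal $I(\sigma_m V)$ and invoking the Taylor expansion. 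The trade-off: the paper's argument is shorter and stays within standard secant-variety machinery, whereas yours is more elementary and actually establishes a more general fact---for any irreducible affine cone $W$, the apex equals the intersection of tangent spaces over any dense family of smooth points---at the cost of the density bookkeeping, which you carried out correctly. One minor wording point: when you conclude $D_x f \in I(W)$ from vanishing on a dense subset of $W$, the relevant fact is that $I(W)$ is the full vanishing ideal (so Zariski density of the vanishing locus suffices), not radicality per se; this does not affect the validity of the argument.
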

\begin{proof}
	For the inclusion from left to right, let $ p\in \apex_m(V) $ and let $ (x_1,\ldots,x_m) \in \mathcal{U}_{V, m} $. By definition of the $ m $-apex, it holds $ J(\sigma_m V, \langle p \rangle) = \sigma_m V$. Terracini's Lemma applied to the join yields $ \langle T_{x_1} V,\ldots, T_{x_m} V, p \rangle \subseteq \langle T_{x_1} V,\ldots, T_{x_m} V\rangle$. Thus, $ p $ lies in $ \langle T_{x_1} V,\ldots,T_{x_m} V  \rangle $. 
	
	Conversely, let $ p $ in the right hand side. Choose a general point $ x\in \mathcal{U}_{V, m} $. Again by Terracini's Lemma, equality $ \langle T_{x_1} V,\ldots, T_{x_m} V, p \rangle = \langle T_{x_1} V,\ldots, T_{x_m} V\rangle $ holds. Since $ x $ is general, thus the dimensions of $ J(\sigma_m V, \langle p \rangle) $ and of $ \sigma_m V $ are equal. By irreducibility, the varieties are equal. Thus $ p\in \apex_m (V) $.
\end{proof}

\begin{remark}
	The apex space is a linear space satisfying $ \apex(V) \subseteq V $. 
	If $ \mathcal{L} $ is a $ G $-module and $ V $ is $ G $-invariant, then $ \apex(V) $ is a $ G $-submodule of $ \mathcal{L} $. 
\end{remark}

\section{The stationarity lemma}\label{sec:stationarity-lemma}

Let $ G $ a group acting on the linear space $ \mathcal{L} $, such that $ \mathcal{L} $ is an irreducible $ G $-module. 
We consider irreducible algebraic varieties $ V, V' $, which are also embedded in some $ G $-modules and which are closed under the $ G $-action. Furthermore, we consider $ G $-equivariant (algebraic) maps 
\begin{align}\label{eq:statement-T-U}
	T\colon \P(V) \to \mathrm{Gr}(N, \mathcal{L}), [x] \mapsto T_{x},\\
	U\colon \P(V') \to \mathrm{Gr}(N', \mathcal{L}),[y] \mapsto U_{y}, 
\end{align}
By $ \grass(N, \mathcal{L}) $, we denote the Grassmannian variety of $ N $-dimensional subspaces of $ \mathcal{L} $. Note that $ G $ acts canonically on the Grassmannian varieties of $ \mathcal{L} $.

Let $ \mathcal{U}_{T, m} =  \{(x_1,\ldots,x_m) \mid \langle T_{x_1},\ldots,T_{x_m} \rangle \text{ is of generic dimension} \}$. In analogy to \Cref{prop:apex-space}, we define the $ m $-th apex of the map $ T $ to be
\begin{align}
	\apex_m(T) :=  \bigcap_{(x_1,\ldots,x_m) \in \mathcal{U}_{T, m}} \langle T_{x_1}, \ldots, T_{x_m} \rangle 
\end{align}
Taking $ T\colon V_{\smooth} \to \grass(\dim V, \mathcal{L}) $ as the tangent map, one recovers the apex space from \Cref{prop:apex-space}.

\begin{lemma}\label{lem:apex-lemma}
	Let $ \mathcal{U} $ a dense open subset of $ \mathcal{U}_{T, m} $. Then 
	\begin{align}
		\apex_m(T) = \bigcap_{(x_1,\ldots,x_m) \in \mathcal{U}} \langle T_{x_1}, \ldots, T_{x_m} \rangle 
	\end{align}
\end{lemma}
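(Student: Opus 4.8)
The inclusion $\apex_m(T) \subseteq \bigcap_{(x_1,\ldots,x_m)\in\mathcal{U}}\langle T_{x_1},\ldots,T_{x_m}\rangle$ is immediate: since $\mathcal{U} \subseteq \mathcal{U}_{T,m}$, intersecting over the smaller index set $\mathcal{U}$ can only enlarge the intersection. The content of the lemma is therefore the reverse inclusion, and the plan is to show that membership of a fixed vector in the span $\langle T_{x_1},\ldots,T_{x_m}\rangle$ is a Zariski-closed condition on $\mathcal{U}_{T,m}$, so that it cannot hold on a dense subset without holding everywhere.

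Fix a vector $v$ in the right-hand intersection $\bigcap_{(x_1,\ldots,x_m)\in\mathcal{U}}\langle T_{x_1},\ldots,T_{x_m}\rangle$. To conclude $v\in\apex_m(T)$, I must show $v\in\langle T_{x_1},\ldots,T_{x_m}\rangle$ for every $(x_1,\ldots,x_m)\in\mathcal{U}_{T,m}$. Consider the locus
\begin{align}
	Z_v = \{(x_1,\ldots,x_m)\in\mathcal{U}_{T,m} \mid v\in\langle T_{x_1},\ldots,T_{x_m}\rangle\}.
\end{align}
By the very definition of $\mathcal{U}_{T,m}$, the span $\langle T_{x_1},\ldots,T_{x_m}\rangle$ has constant dimension, say $r$, throughout $\mathcal{U}_{T,m}$; equivalently, the assignment $(x_1,\ldots,x_m)\mapsto\langle T_{x_1},\ldots,T_{x_m}\rangle$ is a morphism $\Phi\colon\mathcal{U}_{T,m}\to\grass(r,\mathcal{L})$. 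For fixed $v$, the set of $r$-planes containing $v$ is a Schubert variety, hence closed in $\grass(r,\mathcal{L})$, so $Z_v=\Phi^{-1}(\{L : v\in L\})$ is closed in $\mathcal{U}_{T,m}$. Concretely, since $\dim\langle T_{x_1},\ldots,T_{x_m}\rangle=r$ is constant, one has $v\in\langle T_{x_1},\ldots,T_{x_m}\rangle$ if and only if $\dim\langle T_{x_1},\ldots,T_{x_m},v\rangle\le r$, a determinantal (rank) condition cutting out $Z_v$ as a closed subset.

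Now $v$ lies in $\langle T_{x_1},\ldots,T_{x_m}\rangle$ for every $(x_1,\ldots,x_m)\in\mathcal{U}$, so $\mathcal{U}\subseteq Z_v$. Since $\mathcal{U}$ is dense in $\mathcal{U}_{T,m}$ and $Z_v$ is closed, the closure of $\mathcal{U}$ forces $Z_v=\mathcal{U}_{T,m}$ (no irreducibility is needed: a closed set containing a dense set is the whole space). Hence $v$ lies in every span indexed by $\mathcal{U}_{T,m}$, that is $v\in\apex_m(T)$. As $v$ was arbitrary, the reverse inclusion follows and the two intersections coincide.

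The only delicate point is the closedness of $Z_v$, which rests on the fact that the span has constant dimension on $\mathcal{U}_{T,m}$: this is precisely what promotes the a-priori-only-upper-semicontinuous assignment $(x_1,\ldots,x_m)\mapsto\langle T_{x_1},\ldots,T_{x_m}\rangle$ to an honest morphism into a single Grassmannian, and makes the incidence condition $v\in\Phi(\cdot)$ the pullback of a closed Schubert condition. Verifying that $\Phi$ is genuinely a morphism (for instance via Plücker coordinates, or by choosing, Zariski-locally, regular bases of the $T_{x_i}$ from local sections of the tautological bundle and invoking vanishing of $(r+1)\times(r+1)$ minors) is routine, but it is exactly where the constant-dimension hypothesis is indispensable.
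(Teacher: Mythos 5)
Your proof is correct and follows essentially the same route as the paper's: both arguments show that membership of a fixed vector in $\langle T_{x_1},\ldots,T_{x_m}\rangle$ is a Zariski-closed condition on $\mathcal{U}_{T,m}$ (made possible precisely by the constant generic dimension there), so that holding on the dense subset $\mathcal{U}$ forces it on all of $\mathcal{U}_{T,m}$. The paper realizes the closed condition concretely via Plücker coordinates, as the vanishing of $p\wedge t(x_1,\ldots,x_m)$ with $t$ depending polynomially on the points, which is exactly your Schubert/determinantal incidence condition in different clothing.
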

\begin{proof}
	The inclusion from left to right is clear. Now, let $ p $ be contained in the right hand side. Then, $ p $ lies in $ \langle T_{x_1}, \ldots, T_{x_m} \rangle  $ for all $ x = (x_1,\ldots,x_m) \in \mathcal{U} $. Let $ R $ denote the generic dimension of $ \langle T_{x_1}, \ldots, T_{x_m} \rangle  $. For all $ x\in \mathcal{U}_{T, m} $, by the Plücker embedding, we can represent $ \langle T_{x_1}, \ldots, T_{x_m} \rangle  $ as a projective equivalence class of an element $ t(x_1,\ldots,x_m) \in \bigwedge^{R}(\C^n) $, where $ t $ depends polynomially on $ (x_1,\ldots,x_m) $. Define $ f(p, x) = p\wedge t(x_1,\ldots,x_m) $. Clearly, $ f(p, x) $ is the zero tensor for all $ x\in \mathcal{U} $. By continuity, it is constantly zero. But for $ x\in \mathcal{U}_{T, m} $, the identity $ p\wedge t(x_1,\ldots,x_m) = 0  $ implies that $ p\in  \langle T_{x_1}, \ldots, T_{x_m} \rangle $.
\end{proof}

The next Lemma is a generalization of previous results from \cite{Adlandsvik_1988} and \cite{nenashev2017note}. For any $ m\in \N $, define:
\begin{align}\label{eq:statement-T-cap-U}
	a_m:=\dim(\langle T_{x_1} ,\ldots,T_{x_m} \rangle \cap U_{y}) 
\end{align}
where $ x_1,\ldots,x_m \in V, y\in V' $ are generic points. Clearly, we have that 
\begin{align}
	0 = a_0 \leq a_1 \leq a_2 \leq \ldots
\end{align}

\begin{lemma}[Stationarity Lemma]\label{lem:stationarity-lemma}
	Let $ \mathcal{L} $ an irreducible $ G $-module and $ V $, $  V' $, $ T $, $  U $, $ (a_m)_{m} $ as introduced above. If $ m \in \N$ is such that $ a_{m+1} = a_{m} \ne 0 $, then $ \langle T_{x_1},\ldots,T_{x_m} \rangle = \mathcal{L}$ and $ a_m = N' $. 
\end{lemma}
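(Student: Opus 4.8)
The plan is to deduce that for a fixed generic $y$ the intersection $\langle T_{x_1},\ldots,T_{x_m}\rangle\cap U_y$ is independent of the generic tuple $(x_1,\ldots,x_m)$, so that it lands inside $\apex_m(T)$; since $\apex_m(T)$ is a $G$-submodule of the irreducible module $\mathcal{L}$ and it will turn out to be nonzero, it must equal all of $\mathcal{L}$, which forces $\langle T_{x_1},\ldots,T_{x_m}\rangle=\mathcal{L}$.

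First I would reinterpret the hypothesis $a_{m+1}=a_m$ as a statement about swapping points. Fix generic points $x_1,\ldots,x_{m+1}\in V$ and $y\in V'$, write $W_{m+1}=\langle T_{x_1},\ldots,T_{x_{m+1}}\rangle$, and for $j\in\{1,\ldots,m+1\}$ put $W^{(j)}=\langle T_{x_i}\mid i\neq j\rangle$. Each $W^{(j)}$ is the span of a generic $m$-subtuple, hence $\dim(W^{(j)}\cap U_y)=a_m$, while $\dim(W_{m+1}\cap U_y)=a_{m+1}=a_m$ by assumption. Since $W^{(j)}\subseteq W_{m+1}$, the inclusion $W^{(j)}\cap U_y\subseteq W_{m+1}\cap U_y$ is an equality of spaces of equal dimension. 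Therefore all the $W^{(j)}\cap U_y$ coincide with $W_{m+1}\cap U_y$; in particular, dropping $x_m$ or dropping $x_{m+1}$ yields the same space, i.e. replacing one generic point by another does not change $\langle T_{x_1},\ldots,T_{x_m}\rangle\cap U_y$.

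Next I would upgrade this to genuine independence. Fixing generic $y$ and $x_1,\ldots,x_{m-1}$, the previous step shows that the morphism on a dense open subset of $V$ given by $x\mapsto \langle T_{x_1},\ldots,T_{x_{m-1}},T_x\rangle\cap U_y$ takes the same value at two independent generic points of $V$; a morphism from the irreducible variety $V$ into a Grassmannian that agrees on a generic pair is constant, so this intersection does not depend on $x$. As the span is symmetric in the points, the same holds in every argument, and $\langle T_{x_1},\ldots,T_{x_m}\rangle\cap U_y=:I(y)$ is independent of the generic tuple. Consequently $I(y)\subseteq\langle T_{x_1},\ldots,T_{x_m}\rangle$ for all generic $(x_1,\ldots,x_m)$, so \Cref{lem:apex-lemma} gives $I(y)\subseteq\apex_m(T)$. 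Because $a_m\neq 0$ we have $\dim I(y)=a_m>0$, hence $\apex_m(T)\neq 0$. Finally, $G$-equivariance of $T$ makes $\apex_m(T)$ a $G$-submodule of $\mathcal{L}$, since the $G$-action on $\grass(N,\mathcal{L})$ permutes the spans $\langle T_{x_1},\ldots,T_{x_m}\rangle$ via $g\langle T_{x_1},\ldots,T_{x_m}\rangle=\langle T_{gx_1},\ldots,T_{gx_m}\rangle$; irreducibility of $\mathcal{L}$ then forces $\apex_m(T)=\mathcal{L}$. Thus $\langle T_{x_1},\ldots,T_{x_m}\rangle=\mathcal{L}$ for generic $x_i$, and $a_m=\dim(\mathcal{L}\cap U_y)=N'$.

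The step I expect to be most delicate is the passage from ``swapping one point does not change the intersection'' to ``the intersection is constant'': this needs the morphism-constant-on-a-generic-pair argument, together with the bookkeeping that every generic $m$-subtuple of a generic $(m+1)$-tuple is again generic, so that all $W^{(j)}$ genuinely have intersection dimension $a_m$. Once this constancy is established, the dimension count reinterpreting $a_{m+1}=a_m$ and the submodule/irreducibility dichotomy are routine, and crucially it is exactly the hypothesis $a_m\neq 0$ that prevents $I(y)$ from being trivial.
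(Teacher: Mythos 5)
Your proof is correct and follows essentially the same route as the paper's: the stationarity hypothesis $a_{m+1}=a_m$ yields invariance of $\langle T_{x_1},\ldots,T_{x_m}\rangle\cap U_y$ under exchanging generic points, whence this intersection lies in $\apex_m(T)$ via \Cref{lem:apex-lemma}, and that nonzero $G$-invariant subspace of the irreducible module $\mathcal{L}$ must equal $\mathcal{L}$, forcing $\langle T_{x_1},\ldots,T_{x_m}\rangle=\mathcal{L}$ and $a_m=N'$. The only difference is cosmetic: you pass from single-swap invariance to independence of the whole tuple by a ``morphism agreeing on generic pairs is constant'' argument coordinate by coordinate, while the paper iterates pairwise exchanges against a second jointly generic tuple $(x_1',\ldots,x_m')$; both are valid bookkeeping devices for the same step.
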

\begin{proof}
	Consider generic points $x_1, \ldots, x_m, x_1', \ldots, x_m' \in V$, and generic $y \in V'$. We know that
	\begin{align}
		\dim \langle T_{x_1}, \ldots, T_{x_m} \rangle \cap U_{y} = a_m = a_{m+1} = \dim \langle T_{x_1}, \ldots, T_{x_{m}}, T_{x_{m+1}} \rangle \cap U_{y}. \nonumber
	\end{align}
	Therefore, it also holds
	\begin{align}
	\dim \langle T_{x_1}, \ldots, T_{x_m} \rangle \cap U_{y} = \dim \langle T_{x_1}, \ldots, T_{x_m}, T_{x_m'} \rangle \cap  U_{y}.
	\end{align}
	The space on the left-hand side is a subspace of the space on the right-hand side, of same dimension. Hence, they are equal: 
	\begin{align}
		\langle T_{x_1} , \ldots, T_{x_m} \rangle  \cap U_{y} = \langle T_{x_1}, \ldots, T_{x_m}, T_{x_m'} \rangle \cap  U_{y}.
	\end{align}
	We may apply the same argument with exchanged roles of $ x_m $ and $ x_m' $ to obtain 
	\[
	\langle T_{x_1} , \ldots, T_{x_{m-1}}, T_{x_m'} \rangle  \cap U_{y} = \langle T_{x_1}, \ldots, T_{x_m}, T_{x_m'} \rangle \cap  U_{y}.
	\]
	Together, this implies
	\begin{align}\label{eq:ts-xm-exchange}
		\langle T_{x_1}, \ldots, T_{x_{m-1}}, T_{x_m'} \rangle  \cap U_{y} = \langle T_{x_1}, \ldots, T_{x_m} \rangle \cap U_{y}
	\end{align}
	We thus showed that swapping $ x_m $ with $ x_m' $ does not change the space from \Cref{eq:ts-xm-exchange}. 
	Repeating the procedure with all pairs of $ x_i, x_i' $, we obtain
	\begin{align}\label{eq:ts-exchange-all}
		\langle T_{x_1'} , \ldots, T_{x_m'} \rangle  \cap U_{y} = \langle T_{x_1}, \ldots, T_{x_m} \rangle \cap U_{y}
	\end{align}
	In other words, any point $ p $ in the set from \Cref{eq:ts-exchange-all} lies in $ \langle T_{x_1},\ldots,T_{x_m} \rangle $ for all general choices of $ x_1,\ldots,x_m $. 
	Thus,
	\begin{align}
		 \langle T_{x_1}, \ldots, T_{x_m} \rangle \cap  U_{y} = \apex_m(T) \cap U_y,
	\end{align}
	by \Cref{lem:apex-lemma}. 
	In particular, the $ m $-th apex of $ T $ is not the zero space, since $ a_m > 0 $. The space $ \apex_m(T) $ is $ G $-invariant and $ \mathcal{L} $ is an irreducible $ G $-module. Therefore, it follows that $ \mathcal{L} = \apex_m(T) \subseteq \langle T_{x_1}, \ldots, T_{x_m} \rangle $. 
\end{proof}

\begin{remark}\label{rem:adlandsvik-nenashev}
	Similar arguments were present in the work of Ådlandsvik \cite{Adlandsvik_1988} and Nenashev \cite{nenashev2017note}. Ådlandsvik considered the case where $ T = U $ is the tangent map of a variety $ V $, which associates to $ x\in V_{\smooth} $ the tangent space $ T_{x} V $. See \cite[Proposition 2.1(ii)]{Adlandsvik_1988}. He proves a similar stationarity lemma, but he did not endow $ \mathcal{L} $ with a group action. As a result, he arrives at the weaker conclusion that at a stationarity point $ 0\ne a_m = a_{m+1} $, the apex $ \apex_m(T) $ must not be the zero space. 
		
	Independently, Nenashev proved in \cite{nenashev2017note} a result about the Hilbert series of general ideals. It is also a special case of our result, obtained if one takes $ V $ to be any $ \GL $-invariant variety in $ S^{d}(\C^n) $ and $ \mathcal{L} = S^{d+k}(\C^n) $, which is an irreducible $ \GL$-module. Here, the maps $ T = U $ are equal and given by $ T_{x} = (x)_{d+k} $. In other words, a form $ x \in V $ is mapped to some graded component of the principal ideal it generates. 
\end{remark}

\begin{theorem} \label{thm:stationarity-lemma}
	Let $ \mathcal{L} $ be an irreducible $ G $-module, $ V, V' $ irreducible $ G $-invariant affine cones of dimensions $ N $ and $ N' $, respectively. Let $ T\colon \P(V) \to \grass(N, \mathcal{L})$ and $ U\colon \P(V') \to \grass(N', \mathcal{L})$ be $ G $-equivariant maps. 
	Then, there exists an interval $ \mathcal{I}\subseteq \N $ of length at most $ N' $, such that for all $ m\notin \mathcal{I} $, and generic $ x_1,\ldots,x_m \in V$, generic $ y\in V' $, either one of the following holds true:
	\begin{enumerate}
		\item $ \langle T_{x_1}, \ldots, T_{x_m} \rangle \cap U_{y} = \{0\} $ or 
		\item $ \langle T_{x_1}, \ldots, T_{x_m} \rangle = \mathcal{L}. $
	\end{enumerate}
\end{theorem}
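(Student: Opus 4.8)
The plan is to reduce \Cref{thm:stationarity-lemma} to the Stationarity Lemma (\Cref{lem:stationarity-lemma}) by exploiting the monotone sequence $ (a_m)_m $ defined in \Cref{eq:statement-T-cap-U}. First I would observe that the sequence $ 0 = a_0 \le a_1 \le a_2 \le \ldots $ is nondecreasing and bounded above by $ N' = \dim U_y $, since each $ a_m $ is the dimension of a subspace of $ U_y $. A bounded, integer-valued, nondecreasing sequence can strictly increase at most $ N' $ times, so the set of indices $ m $ at which the sequence has \emph{not yet} reached its eventual constant value, but where a ``productive'' increase is still possible, is confined to an interval of bounded length. The goal is to pin down an interval $ \mathcal{I} $ of length at most $ N' $ outside of which the dichotomy (1)/(2) is forced.

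The key step is to partition $ \N $ into three regimes according to the behaviour of $ (a_m)_m $. In the first regime, $ a_m = 0 $: here the generic intersection $ \langle T_{x_1},\ldots,T_{x_m}\rangle \cap U_y $ is $ \{0\} $, which is exactly alternative (1). In the third regime, once we hit any index $ m $ with $ a_{m+1} = a_m \ne 0 $, the Stationarity Lemma immediately gives $ \langle T_{x_1},\ldots,T_{x_m}\rangle = \mathcal{L} $, which is alternative (2); and for all larger indices the span only grows, so it stays equal to $ \mathcal{L} $ and alternative (2) persists. The intermediate regime—the bad interval $ \mathcal{I} $—consists of those indices where $ a_m \ne 0 $ but the sequence is still strictly increasing at every step, i.e. $ a_{m} < a_{m+1} $. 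I would define $ \mathcal{I} $ to run from the first index $ m_0 $ with $ a_{m_0} \ne 0 $ up to (but not including) the first stationarity index $ m_1 $ with $ a_{m_1+1} = a_{m_1} \ne 0 $.

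To bound the length of $ \mathcal{I} $, I would argue that on $ \mathcal{I} $ the sequence $ a_m $ strictly increases at each step: for every $ m \in \mathcal{I} $ we have $ a_m \ne 0 $ and $ a_{m+1} > a_m $ (otherwise $ m $ would already be a stationarity index, ending the interval). Since $ a_{m_0} \ge 1 $ and $ a_m \le N' $ throughout, and each step on $ \mathcal{I} $ increases the integer $ a_m $ by at least one, the number of indices in $ \mathcal{I} $ is at most $ N' - a_{m_0} + 1 \le N' $. This gives the length bound. It remains to check the boundary bookkeeping: for $ m < m_0 $ we are in regime one (alternative (1)); for $ m \ge m_1 $ we are in regime three (alternative (2)); and $ \mathcal{I} = \{m_0, m_0+1, \ldots, m_1 - 1\} $ has at most $ N' $ elements.

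The main obstacle I anticipate is making the three-regime partition genuinely exhaustive and airtight at the transitions—in particular, confirming that once the Stationarity Lemma fires at $ m_1 $, alternative (2) is not just true at $ m_1 $ but at \emph{all} $ m \ge m_1 $ (which follows because enlarging the set of spanning tangent spaces can only enlarge the span, and it is already everything), and dually that alternative (1) holds for \emph{all} $ m < m_0 $ (immediate from $ a_m = 0 $). One subtlety worth flagging is the degenerate case where no stationarity index exists below the point at which the span fills $ \mathcal{L} $; but since $ (a_m)_m $ is bounded by $ N' $ it must eventually become constant, so a stationarity index (or the trivial fact that the span is already $ \mathcal{L} $) always exists, and the interval is well-defined. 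With these edge cases handled, the dichotomy holds for every $ m \notin \mathcal{I} $.
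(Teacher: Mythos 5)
Your proposal is correct and takes essentially the same route as the paper's proof: both use the monotone sequence $(a_m)_m$, invoke \Cref{lem:stationarity-lemma} to force strict growth of $a_m$ on the intermediate regime (and to conclude $\langle T_{x_1},\ldots,T_{x_m}\rangle = \mathcal{L}$ once stationarity occurs), and bound the resulting bad interval by $N'$. The only differences are cosmetic indexing conventions for the endpoints $m_0$ and $m_1$, since your first stationarity index coincides with the paper's smallest $m$ for which the span fills $\mathcal{L}$.
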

\begin{proof}
	There exists a smallest $ m_1 \in \N $ such that $ \langle T_{x_1}, \ldots, T_{x_{m_{1}}} \rangle = \mathcal{L} $. There also exists a largest $ m_0\in \N_0 $ such that $ \langle T_{x_1}, \ldots, T_{x_{m_0}} \rangle \cap U_{y} = \{0\} $. The claim holds true for all $ m\notin \mathcal{I}:= \{m_0+1,\ldots,m_1\} $. The length of this interval is $ m_1-m_0 $. 	By \Cref{lem:stationarity-lemma}, we know that the dimension of the intersection is strictly increasing between $ m_0 $ and $ m_1 $, and for $ m_1 $, the intersection equals $ U_{y} $. Therefore, 
	\begin{align}\label{eq:m1-m0-bound}
		m_1-m_0 \leq a_{m_1} - a_{m_0} = \dim U_{y} = N'. 
	\end{align}
	This asserts the claim.
\end{proof}

In the special case $ T = U $, it is possible to deduce explicit bounds for the quantities $ m_0 $ and $ m_1 $, which occur in the proof of \Cref{thm:stationarity-lemma}. In the following, we show this for the even more special case, where $ T = U $ is the tangent map of a variety. In this way, we obtain a very general criterion for secant nondefectivity and estimates for the generic rank with respect to a large class of base varieties. 

\begin{cor}
	\label{cor:generalized-nenashev-nondefectivity}
	Let $ V $ be an $ N $-dimensional, irreducible subvariety of the irreducible $ G $-module $ \mathcal{L} $. Assume $ V $ is $ G $-invariant. Then,  
	\begin{enumerate}
		\item $ V $ is $ m $-nondefective for all $  m\leq \frac{\dim \mathcal{L}}{N} - N $.
		\item The generic rank of $ \mathcal{L} $ with respect to $ V $ is at most $ m^{\circ} \leq \frac{\dim\mathcal{L}}{N} + N $. 
	\end{enumerate}
\end{cor}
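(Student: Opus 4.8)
The plan is to specialize \Cref{thm:stationarity-lemma} to the case $T = U$ equal to the tangent map of $V$, where $N' = N$ and the interval $\mathcal{I}$ has length at most $N$. The key observation is that the quantity $a_m = \dim(\langle T_{x_1}V,\ldots,T_{x_m}V\rangle \cap T_y V)$ controls both defectivity and the generic rank, so I would first pin down explicit values of $m_0$ and $m_1$ from the proof of that theorem. For the lower endpoint, I would argue that as long as the generic span $\langle T_{x_1}V,\ldots,T_{x_m}V\rangle$ has not yet met a generic tangent space $T_y V$ (i.e. $m \le m_0$), parameter counting forces this span to have the expected dimension $mN$: if it were smaller, then the $m$ tangent spaces would fail to be in general position, yet their span would still avoid a generic $N$-dimensional subspace, and I would show this persists—more precisely, $a_m = 0$ means $\dim\langle T_{x_1}V,\ldots,T_{x_m}V\rangle + N \le \dim\mathcal{L}$ and, combined with Terracini, nondefectivity up to this threshold.

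Concretely, I would first establish part (1). Using Terracini's Lemma, $V$ is $m$-nondefective precisely when $\dim\langle T_{x_1}V,\ldots,T_{x_m}V\rangle = \min\{mN,\dim\mathcal{L}\}$ for generic $x_i$. I would show that whenever $a_m = 0$, the spans grow by exactly $N$ at each step, so the span has dimension $mN$ and $V$ is $m$-nondefective. It therefore suffices to bound how long $a_m = 0$ persists, i.e. to lower-bound $m_0$. Since a generic tangent space $T_y V$ is an $N$-dimensional subspace and the span $\langle T_{x_1}V,\ldots,T_{x_m}V\rangle$ has dimension at most $mN$, a dimension count in $\mathcal{L}$ shows that the intersection is forced to be nonzero only once $mN + N > \dim\mathcal{L}$; hence $a_m = 0$ is guaranteed for all $m$ with $(m+1)N \le \dim\mathcal{L}$, that is $m \le \frac{\dim\mathcal{L}}{N} - 1$. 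This already gives nondefectivity in that range, but I want the cleaner bound $m \le \frac{\dim\mathcal{L}}{N} - N$; the safer route is to note that the interval $\mathcal{I}$ of \Cref{thm:stationarity-lemma} has length at most $N' = N$, so once $a_m$ becomes nonzero the span fills $\mathcal{L}$ within $N$ further steps, and before $\mathcal{I}$ the span is nondefective. Thus nondefectivity holds for all $m \le m_0$, and I would combine $m_0 \ge m_1 - N$ with the filling estimate below.

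For part (2), the generic rank $m^\circ$ is the smallest $m$ with $\langle T_{x_1}V,\ldots,T_{x_m}V\rangle = \mathcal{L}$, namely $m_1$. The lower bound $m^\circ \ge \frac{\dim\mathcal{L}}{N}$ is immediate from parameter counting, since $m$ tangent spaces span at most $mN$ dimensions. For the upper bound, I would combine $m_1 = m_0 + (\text{length of } \mathcal{I})$ with the length bound $\le N$ from \Cref{eq:m1-m0-bound} and the estimate on $m_0$: since $a_{m_0} = 0$ forces the span to be nondefective of dimension $m_0 N \le \dim\mathcal{L}$, we get $m_0 \le \frac{\dim\mathcal{L}}{N}$, and hence
\begin{align}
	m^\circ = m_1 = m_0 + (m_1 - m_0) \le \frac{\dim\mathcal{L}}{N} + N.
\end{align}

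The main obstacle will be the bookkeeping at the boundary between the "$a_m=0$" regime and the interval $\mathcal{I}$, i.e. making the implication "$a_m = 0 \Rightarrow$ nondefective of dimension $mN$" airtight and correctly aligning $m_0$ with the nondefectivity threshold $\frac{\dim\mathcal{L}}{N} - N$ rather than the naive $\frac{\dim\mathcal{L}}{N} - 1$. The subtlety is that nondefectivity could in principle fail already inside $\mathcal{I}$ even while $a_m = 0$, so I would need to verify that the transition point $m_0$ occurs no earlier than the last nondefective index, which is where the extra factor of $N$ (the length of $\mathcal{I}$) must be absorbed into the bound. Everything else is a direct translation of \Cref{thm:stationarity-lemma} and Terracini's Lemma into statements about secant dimensions.
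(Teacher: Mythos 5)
Your ``safer route'' is exactly the paper's proof, and it is complete as you sketch it: while $a_m = 0$ the tangent-space sums are direct, so Terracini gives nondefectivity for all $m \le m_0$; \Cref{thm:stationarity-lemma} (via \Cref{eq:m1-m0-bound}) gives $m_1 - m_0 \le N$; parameter counting gives $m_1 \ge \frac{\dim\mathcal{L}}{N}$, while directness at $m_0$ gives $m_0 \le \frac{\dim\mathcal{L}}{N}$; combining these yields both claims. Had you written only that, your argument would coincide with the paper's.

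The problem is your first route, which you present as valid (``this already gives nondefectivity in that range'') and merely less clean. It is false. The dimension count shows only that the intersection $\langle T_{x_1}V,\ldots,T_{x_m}V\rangle \cap T_yV$ is not \emph{forced} by dimension reasons to be nonzero when $(m+1)N \le \dim\mathcal{L}$; it does not show that the intersection \emph{is} zero, because tangent spaces of a variety are not in general position --- ruling out exactly this failure is the entire content of the corollary, and it is where the group action and the stationarity lemma enter. Concretely, $V = \nu_4(\C^3) \subset S^4(\C^3) = \mathcal{L}$ satisfies every hypothesis of the corollary ($\GL_3$-invariant irreducible cone in an irreducible module), with $N = 3$ and $\dim \mathcal{L} = 15$. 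Your claim would give $a_4 = 0$, hence (by your own correct induction) that five generic tangent spaces span all of $\mathcal{L}$, i.e.\ that $\sigma_5 \nu_4(\C^3)$ fills $S^4(\C^3)$. But this is the classical Alexander--Hirschowitz exception: $\sigma_5\nu_4(\C^3)$ is a hypersurface, hence defective, so in fact $a_4 \neq 0$. The moral is that $a_m$ can become nonzero strictly before the naive threshold $\frac{\dim\mathcal{L}}{N}-1$, and an honest bound must absorb the width $N$ of the window $\mathcal{I}$, which is precisely why the corollary's threshold is $\frac{\dim\mathcal{L}}{N}-N$. Relatedly, your closing worry that ``nondefectivity could in principle fail already inside $\mathcal{I}$ even while $a_m = 0$'' is backwards: as long as $a_{m'} = 0$ for all $m' \le m$, the sums are direct and nondefectivity cannot fail; the actual danger is the opposite one, namely that $a_m$ becomes nonzero (and defectivity sets in) earlier than parameter counting suggests, exactly as the example shows.
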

\begin{proof}
	In \Cref{thm:stationarity-lemma}, take $ T = U $ as the tangent map 
	\begin{align}
		T\colon V_{\smooth} \to \grass(\mathcal{L}, N), x \mapsto T_{x} V,
	\end{align}
	which takes a point $ x \in V $ to its tangent space. We have $ N = N' = \dim V $ in the notation of \Cref{thm:stationarity-lemma}. Let $ m_0 $ and $ m_1 $ be as in \Cref{thm:stationarity-lemma}. 	From counting parameters, we have the obvious bounds $ N(m_0+1) \leq \dim \mathcal{L} $ and $ N m_1 \ge \dim \mathcal{L} $. Using these together with \Cref{eq:m1-m0-bound}, we obtain 
	\begin{align}
			m_0 \ge m_1 - N \ge \frac{\dim \mathcal{L}}{N} - N,
	\end{align}
	and 
	\begin{align}
			m_1 \le m_0 + N \leq \frac{\dim \mathcal{L} - N}{N} + N = \frac{\dim \mathcal{L}}{N} + N -1.
	\end{align}
	 and we obtain that a sum of $ m $ tangent spaces is direct, as long as $  m\leq \frac{\dim  \mathcal{L}}{N} - N $. Similarly, we also obtain from \Cref{thm:stationarity-lemma} that if $  m\geq \frac{\dim \mathcal{L}}{N} + N $, then the $ m $-th secant variety $ \sigma_m V $ must fill everything. 
\end{proof}

We now state an important result by Massarenti and Mella \cite{Massarenti_Mella_2022}, which enables us to link the study of defectivity of varieties $V$ to their identifiability properties.

\begin{theorem}[{Massarenti, Mella, \cite[Theorem 1.5]{Massarenti_Mella_2022}}]\label{thm:massarenti-mella}
Let $V \subseteq \mathcal{L}$ be an irreducible and non-degenerate variety of dimension $N$, $m \geq 1$ an integer and assume that: 
\begin{enumerate}
\item
$mN \leq \dim \mathcal{L}$,
\item
The fibers of the tangent map $ T\colon \P(V_{\smooth}) \to \grass(N, \mathcal{L}) $ are finite. 
\item
$V$ is not $(m+1)$-defective.
\end{enumerate}
Then, $V$ is $m$-identifiable.
\end{theorem}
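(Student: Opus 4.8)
The statement is an identifiability criterion of the type ``nondefectivity plus a finiteness hypothesis forces uniqueness'', and the plan is to prove it by an inductive tangential projection, reducing the pair $(V,m)$ to a pair $(V',m-1)$ of strictly smaller secant order.

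First I would reduce identifiability to a uniqueness statement about a generically finite map. Write $\Phi\colon V^{m}\dashrightarrow\sigma_mV$ for the summation map $(x_1,\ldots,x_m)\mapsto x_1+\cdots+x_m$. Hypotheses (1) and (3) force $\sigma_mV$ to be nondefective: if $\sigma_mV$ were $m$-defective then, since $\dim\sigma_{m+1}V\le\dim\sigma_mV+N$, the defect would persist and contradict non-$(m+1)$-defectivity. Hence $\dim\sigma_mV=mN$ and the general fiber of $\Phi$ is finite, so $m$-identifiability is precisely the assertion that this fiber is a single $S_m$-orbit; it remains to upgrade ``finitely many'' decompositions to ``one''.

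For the inductive step I would fix a general $z\in\sigma_mV$ together with one summand $p:=x_m$ of a general decomposition; since $z$ is general, $p$ is a general smooth point of $V$. Let $\pi\colon\P(\mathcal L)\dashrightarrow\P(\mathcal L/T_pV)=:\P(\mathcal L')$ be the projection away from the embedded tangent space $T_pV$, and set $V':=\overline{\pi(V)}$. Because $V$ is a cone, $p\in T_pV$, so $\pi$ contracts $p$ and $\pi(z)=\pi(x_1)+\cdots+\pi(x_{m-1})\in\sigma_{m-1}V'$. I would then check that the three hypotheses descend to $(V',m-1)$: $2$-nondefectivity of $V$ (a consequence of (3)) gives $\dim V'=N$ and non-degeneracy of $V'$ in $\mathcal L'$; the Terracini dictionary $\langle T_{\pi(x_1)}V',\ldots,T_{\pi(x_m)}V'\rangle=\langle T_{x_1}V,\ldots,T_{x_m}V,T_pV\rangle/T_pV$ shows that nondefectivity of $\sigma_m(V')$ is equivalent to non-$(m+1)$-defectivity of $V$, i.e.\ hypothesis (3); and $mN\le\dim\mathcal L$ becomes $(m-1)N\le\dim\mathcal L'$, i.e.\ hypothesis (1). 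By induction $V'$ is then $(m-1)$-identifiable, and the base case $m=1$ is trivial.

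The heart of the argument — and the main obstacle — is the lifting step: passing from uniqueness of the decomposition of $\pi(z)$ in $V'$ back to uniqueness of the decomposition of $z$ in $V$. The difficulty is that a competing decomposition of $z$ need not contain $p$ as a summand, so its image is an $m$-term rather than an $(m-1)$-term decomposition of $\pi(z)$, and one must rule this out. This is exactly where hypothesis (2) is indispensable: finiteness of the fibers of the tangent map $T\colon\P(V_{\smooth})\to\grass(N,\mathcal L)$ guarantees that the general tangential projection $\pi|_V$ is birational onto $V'$ and does not contract a positive-dimensional family, so that decompositions upstairs and downstairs are in honest bijection and the spurious extra summand cannot appear. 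Equivalently, in the language of contact loci, (2) together with (3) excludes $m$-weak defectivity of $V$ — a positive-dimensional contact locus would either force $\sigma_{m+1}V$ to be defective or force the tangent map to have positive-dimensional fibers — and the absence of weak defectivity is what yields identifiability. Controlling this tangential projection, via a second-fundamental-form analysis, is the technical core; once it is in place, the remaining points (genericity of $p$, disjointness of competing decompositions after deleting shared summands, and the equivalences among the various nondefectivity conditions) are routine.
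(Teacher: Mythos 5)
First, a point of order: the paper contains \emph{no proof} of this statement. It is imported verbatim as an external result of Massarenti and Mella \cite{Massarenti_Mella_2022} and used as a black box, so there is no internal argument to compare yours against; the benchmark is the original proof, which occupies most of that paper. Measured against it, your proposal is a road map rather than a proof, and two of its steps are genuinely broken. The first is the opening reduction. From $m$-defectivity of $V$ and $\dim \sigma_{m+1}V \le \dim\sigma_m V + N$ you can contradict hypothesis (3) only when the expected dimension of $\sigma_{m+1}V$ equals $(m+1)N$, i.e.\ when $(m+1)N \le \dim\mathcal{L}$; hypothesis (1) gives only $mN \le \dim\mathcal{L}$. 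In the window $mN \le \dim\mathcal{L} < (m+1)N$ the inference fails, and in fact the theorem \emph{as transcribed here} is false there: for the cone $V \subset \mathcal{L} = S^2(\C^3)$ over the Veronese surface ($N=3$, $\dim\mathcal{L}=6$, $m=2$) all three hypotheses hold ($\sigma_3 V = \mathcal{L}$ and the tangent map is injective), yet $V$ is $2$-defective and a general rank-$2$ conic admits a one-parameter family of decompositions. The original theorem carries the stronger numerical hypothesis, in the affine notation used here, $(m+1)N \le \dim\mathcal{L}$; your argument silently needs it too.

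The more fundamental gap is the lifting step, which is exactly where you assert, rather than prove, the theorem. The claim that finiteness of the fibers of the tangent map forces the general tangential projection $\pi|_V$ to be birational onto $V'$ is false as a bare implication: the Veronese varieties have injective tangent maps, yet their tangential projections contract positive-dimensional subvarieties (that is precisely what $2$-defectivity means), so non-defectivity must enter the argument; and once it does, upgrading ``generically finite'' to ``degree one'' is precisely the relevant case of Bronowski's conjecture, whose partial resolution is the entire content of \cite{Massarenti_Mella_2022} and is obtained there by a delicate analysis, not by formal bookkeeping. Likewise, your parenthetical claim that a positive-dimensional contact locus would force either $(m+1)$-defectivity or a degenerate tangent map is itself (a version of) the main theorem of Casarotti--Mella and Massarenti--Mella, not a routine observation. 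Finally, your induction never verifies that hypothesis (2) descends to $V'$, which you would need in order to invoke the inductive hypothesis. In short, the skeleton is of the right shape, but every load-bearing joint is a placeholder for the hard part of the cited proof.
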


If a variety $ V $ satisfies Condition (2) of \Cref{thm:massarenti-mella}, we also say that the tangent map of $ V $ is \emph{nondegenerate}.

\raggedbottom
\section{Applications}\label{sec:applications}

\subsection{Grassmannian varieties}

\begin{notation}
	Let $ d, n \in \N $. We denote by $ \grass(d, n) $ the \emph{Grassmannian variety} of $ d $-dimensional subspaces of $ \C^n $. Via the \emph{Plücker embedding}, we may see $ \grass(d, n) $ as a subvariety of the Schur functor $ \bigwedge^{d}(\C^n) $.  
\end{notation}

Secants to Grassmannian varieties are a natural object to study in tensor geometry. Just a few years after the celebrated Alexander-Hirschowitz result, Catalisano, Geramita and Gimigliano examined their nondefectivity \cite{Catalisano_Geramita_Gimigliano_2003}. Galgano \cite{Galgano_Staffolani_2022} examined non-generic identifiability for $ \sigma_2 \grass(d, n) $. Baur, Draisma and de Graaf \cite{Baur_Draisma_DeGraaf_2007} discovered connections between secant nondefectivity of Grassmannians and coding theory. In addition, their work lead to the following conjecture. 

\begin{conjecture}[Baur–Draisma–de Graaf] \label{conj:baur-draisma-degraaf} Let $ n, d, m\in \N $ and $ d\ge 3 $. Then, the secant variety $ \sigma_m \grass (d, n) $ has the expected dimension $\min(m \cdot d(n-d), \binom{n}{d})$, except for the following cases: \\
	\begin{tabular}{lcc}
		& \textbf{Actual Codimension} & \textbf{Expected Codimension} \\
		$\sigma_3 \grass(3, 7) $ & 1 & 0 \\
		$\sigma_3 \grass(4, 8) $ & 20 & 19 \\
		$\sigma_4 \grass(4, 8) $ & 6 & 2 \\
		$\sigma_4 \grass(3, 9) $ & 10 & 8 \\
	\end{tabular}
\end{conjecture}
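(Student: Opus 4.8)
\section*{Proof proposal for the Baur--Draisma--de Graaf conjecture}

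The plan is to split the range of $m$ into three regimes and to treat each by a different tool, reserving a computer-assisted classification for the hard middle band. Applying \Cref{cor:generalized-nenashev-nondefectivity} to the affine cone $V=\grass(d,n)$ inside the irreducible $\GL(\C^n)$-module $\mathcal{L}=\bigwedge^d(\C^n)$, with $N=\dim V=d(n-d)+1$ and $\dim\mathcal{L}=\binom{n}{d}$, immediately yields nondefectivity for every $m\leq \frac{\binom{n}{d}}{d(n-d)+1}-(d(n-d)+1)$ and shows that $\sigma_m\grass(d,n)=\mathcal{L}$ once $m\geq \frac{\binom{n}{d}}{d(n-d)+1}+(d(n-d)+1)$. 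This resolves the conjecture outside an interval $\mathcal{I}$ of length at most $N$ straddling the generic rank, which is the source of the ``most cases'' statement. To obtain the \emph{full} classification I must additionally (i) clear the window $\mathcal{I}$, (ii) certify the defectivity half, and (iii) pin the exception list to exactly the four tabulated pairs.

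For the window I would run an induction on $n$ (with a secondary induction on $d$) built on the splitting $\bigwedge^d(\C^{n+1})\cong \bigwedge^d(\C^n)\oplus\bigwedge^{d-1}(\C^n)$ induced by a decomposition $\C^{n+1}=\C^n\oplus\C e$. Specializing a prescribed number of the $m$ Plücker points onto the sub-Grassmannian $\grass(d,n)\subseteq\grass(d,n+1)$ and expanding via Terracini's Lemma, the span $\langle T_{x_1}V,\ldots,T_{x_m}V\rangle$ separates into a part living in $\bigwedge^d(\C^n)$, governed by secants of $\grass(d,n)$, and a part in $\bigwedge^{d-1}(\C^n)\wedge e$, governed by secants of $\grass(d-1,n)$. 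The aim is a Horace-type propagation lemma: if the two smaller Grassmannians are nondefective at the apportioned ranks, then $\grass(d,n+1)$ is nondefective at the target $m\in\mathcal{I}$. The ranks must be distributed between the two summands so that each sub-problem sits \emph{strictly outside} its own window, and this is precisely where the slack $N$ in the Corollary's two bounds is consumed.

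The base of the induction and the defectivity statement I would settle by direct Terracini computation. For each of the four listed pairs one evaluates $\dim\langle T_{x_1}V,\ldots,T_{x_m}V\rangle$ at explicit Plücker points (or at random points over a large finite field) and verifies that the dimension falls short of the expected value by exactly $1$, $1$, $4$, $2$, matching the tabulated codimensions; upper semicontinuity of the rank then certifies these as genuine defects rather than artifacts of a special configuration. For every remaining small $(n,d)$ that seeds the induction the same computation must instead return the expected dimension, and one checks that the computed generic rank and the endpoints $m_0,m_1$ from \Cref{thm:stationarity-lemma} dovetail so that no admissible value of $m$ is silently skipped.

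The hard part will be closing $\mathcal{I}$ uniformly in $n$ while keeping the exception list tight. The Corollary by construction never reaches the generic rank, so the propagation lemma alone must cover the entire band, and the feasibility of apportioning ranks between $\bigwedge^d(\C^n)$ and $\bigwedge^{d-1}(\C^n)$ is most fragile for small $d$ and small $n$ --- exactly where the four honest exceptions reside. The delicate point is therefore to arrange the induction so that these four defects appear as the unavoidable base-case failures, while proving they do \emph{not} propagate to larger $(n,d,m)$; only then does the classification close with the exception set being precisely the four stated cases and nothing more.
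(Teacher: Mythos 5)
First, note the object you are proving: in the paper this statement is a \emph{conjecture}, not a theorem. The paper deliberately proves only the ``most cases'' part (\Cref{thm:grassmannian-nondefective}, via \Cref{cor:generalized-nenashev-nondefectivity}), leaving unresolved an interval of ranks of length roughly $d(n-d)$ around the generic rank, plus the defectivity claims. Your first regime therefore coincides exactly with what the paper actually establishes, and that part of your write-up is fine. Everything beyond it is a research program rather than a proof, and it has at least two concrete gaps. The central one is the Horace-type induction: the splitting $\bigwedge^d(\C^{n+1})\cong\bigwedge^d(\C^n)\oplus\bigl(\bigwedge^{d-1}(\C^n)\wedge e\bigr)$ does not interact with tangent spaces the way you assert. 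If $x=[u_1\wedge\ldots\wedge u_d]$ with all $u_i\in\C^n$, then $T_x\grass(d,n+1)$ is spanned by elements $u_1\wedge\ldots\wedge v_i\wedge\ldots\wedge u_d$ where $v_i$ ranges over all of $\C^{n+1}$; so the tangent space at a specialized point is \emph{not} contained in $\bigwedge^d(\C^n)$ but spills into the second summand. One must then control the quotient/trace of the tangent spaces modulo one summand, and making such a d\'egu\'e--Horace scheme work for Grassmannians is precisely the obstruction that has kept the conjecture open; your proposal names the difficulty (``the hard part will be closing $\mathcal{I}$'') but supplies no propagation lemma, no rank apportionment, and no mechanism preventing the four exceptional defects from contaminating the induction. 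As written, nothing in the middle band is proved.

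The second gap is the certification of the defective cases. Semicontinuity runs the wrong way for your purpose: the dimension of $\langle T_{x_1}V,\ldots,T_{x_m}V\rangle$ at a random point (or over a finite field) is a \emph{lower} bound for the generic value, so such a computation can certify nondefectivity but can never certify defectivity --- the generic dimension could exceed the value at your sampled configuration, and reduction mod $p$ can only decrease rank further. To certify that $\sigma_3\grass(3,7)$, $\sigma_3\grass(4,8)$, $\sigma_4\grass(4,8)$, $\sigma_4\grass(3,9)$ are genuinely defective one needs either an exact symbolic Terracini computation at a point with independent transcendental parameters, or explicit equations vanishing on the secant (e.g., the classical invariant-theoretic equations known for these cases). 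Minor point: you take $N=d(n-d)+1$ for the affine cone where the paper's \Cref{thm:grassmannian-nondefective} uses $d(n-d)$; this discrepancy is harmless for the asymptotics but you should fix one convention and make the expected-dimension count in the conjecture's statement consistent with it.
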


The table in \Cref{conj:baur-draisma-degraaf} was taken from \cite[Conj 4.2]{hitchhiker_Bernardi_Carlini_Catalisano_Gimigliano_Oneto_2018}. Note that we adapted to affine notation, so in our table, both $ d $ and $ n $ are each larger by $ 1 $ compared to the table in \cite[Conj 4.2]{hitchhiker_Bernardi_Carlini_Catalisano_Gimigliano_Oneto_2018}. If it was known to be true, \Cref{conj:baur-draisma-degraaf} could be seen as an ``alternating'' version of the Alexander-Hirschowitz theorem, where the Schur functor $ S^d $ of symmetric powers is replaced by the Schur functor $ \bigwedge^{d} $ of alternating powers, and the Veronese embedding is replaced by the Plücker embedding. Fortunately, with \Cref{cor:generalized-nenashev-nondefectivity}, we are able to resolve a lot of cases of the Baur-Draisma-de Graaf conjecture.

\begin{theorem}\label{thm:grassmannian-nondefective}
	For any $ d, n, m\in \N $, the Grassmannian variety $ \grass(d, n) $ is $ m $-nondefective (and $ (m-1) $-identifiable), if 
	\begin{align}\label{eq:grassmannian-nondefective}
		m &\leq \frac{\dim \bigwedge^{d}(\C^n)}{\dim \grass(d, n)} - \dim \grass(d, n)\\
		&=	\frac{1}{d(n-d)}\binom{n}{d} - d(n-d)
	\end{align}
	Conversely, the generic rank of $ \bigwedge^{d}(\C^n) $ with respect to $ \grass(d, n) $ is at most
	\begin{align}
		\frac{1}{d(n-d)}\binom{n}{d} + d(n-d). 
	\end{align}
\end{theorem}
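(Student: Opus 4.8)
The plan is to read off both statements directly from \Cref{cor:generalized-nenashev-nondefectivity} by exhibiting the Grassmannian as a special case of its hypotheses. Concretely, I would take $G = \GL(\C^n)$, the module $\mathcal{L} = \bigwedge^{d}(\C^n)$, and $V$ the affine cone over $\grass(d,n)$ in its Plücker embedding, i.e.\ the Zariski closure of the set of decomposable tensors $\{u_1\wedge\cdots\wedge u_d \mid u_i \in \C^n\}$. Since the corollary already packages the entire stationarity argument, the only genuine content is to verify its three hypotheses and to substitute the correct numerical values of $\dim\mathcal{L}$ and $N = \dim V$.

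First I would check the hypotheses. The module $\bigwedge^{d}(\C^n)$ is an irreducible $\GL(\C^n)$-representation, which is the classical fact that each exterior power of the standard representation is irreducible. The cone $V$ is irreducible, being the image of the irreducible variety $(\C^n)^{d}$ under the multilinear wedge map, and it is manifestly closed under scaling. Finally $V$ is $G$-invariant, because $g\cdot(u_1\wedge\cdots\wedge u_d) = (gu_1)\wedge\cdots\wedge(gu_d)$ is again decomposable. With the hypotheses in place, I would substitute $\dim\mathcal{L} = \dim\bigwedge^{d}(\C^n) = \binom{n}{d}$ together with $N = \dim\grass(d,n)$ into the two conclusions of \Cref{cor:generalized-nenashev-nondefectivity}, which reproduces the nondefectivity range of \Cref{eq:grassmannian-nondefective} and the stated upper bound on the generic rank.

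For the parenthetical $(m-1)$-identifiability, I would apply Massarenti--Mella (\Cref{thm:massarenti-mella}) at level $m-1$. Its hypotheses are: $(m-1)N \leq \dim\mathcal{L}$, which is automatic whenever $m$ lies in the stated nondefectivity range; nondegeneracy of $V$, which holds since $V$ is a nonzero $G$-invariant subset of the irreducible module $\mathcal{L}$ and therefore spans it; non-$m$-defectivity, which is exactly what was just proved; and finiteness of the fibers of the tangent (Gauss) map. Only the last point needs external input, and I would supply it by invoking the finiteness of the Gauss map of a smooth projective variety — the Grassmannian being smooth — so that the induced tangent map on the cone has finite fibers.

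The step I expect to require the most care is the dimension bookkeeping: $\grass(d,n)$ has projective dimension $d(n-d)$, whereas the affine cone $V$ that feeds into \Cref{cor:generalized-nenashev-nondefectivity} has dimension $d(n-d)+1$, so one must track precisely which value of $N$ enters the thresholds and confirm that the affine-versus-projective offset does not corrupt the stated inequalities. Beyond this, and beyond citing the nondegeneracy of the Gauss map for the identifiability clause, the argument is a pure substitution into the corollary.
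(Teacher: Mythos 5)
Your proposal follows essentially the same route as the paper's own proof: the paper likewise obtains the nondefectivity and generic-rank bounds by applying \Cref{cor:generalized-nenashev-nondefectivity} to the $\GL(\C^n)$-invariant cone of decomposable tensors in the irreducible module $\bigwedge^{d}(\C^n)$, and gets $(m-1)$-identifiability from \Cref{thm:massarenti-mella} by noting that smoothness of the Grassmannian makes the tangent map nondegenerate. On the one point you flag as delicate you are in fact more careful than the paper, which silently substitutes the projective dimension $d(n-d)$ for the $N$ of the corollary rather than the affine-cone dimension $d(n-d)+1$.
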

\begin{proof}
	Since the Grassmannian is a smooth variety, its tangent map is nondegenerate. Therefore, $ m $-nondefectivity implies $ (m-1) $-identifiability, by \Cref{thm:massarenti-mella}. The rest follows from \Cref{cor:generalized-nenashev-nondefectivity}.
\end{proof}

\begin{remark} 
	Our \Cref{thm:grassmannian-nondefective} settles a large number of cases the Baur–Draisma–de Graaf conjecture stated below. In fact, for any $ d \ge 3 $, the number of ranks $ m $ for which the conjecture is not settled by our \Cref{thm:grassmannian-nondefective} is asymptotically negligible as a function in $ n $, for large $ n $. For $ d\leq 2 $, our theorem is vacuous. This is also necessary, since secants to Grassmannians of two-dimensional subspaces are known to be non-identifiable. 
	
	Let us compare our \Cref{thm:grassmannian-nondefective} with earlier results. Catalisano, Geramita and Gimigliano \cite{Catalisano_Geramita_Gimigliano_2003} showed nondefectivity for all $ m \leq n/d $, if $ d\ge 3 $. Rischter \cite[Theorem 3.5.1]{Rischter_thesis_2017} showed nondefectivity for all $ m $ bounded by some complicated function in $ n $ and $ d $, with asymptotic growth around $ \mathcal{O}(n^{\log_2(d)+1}) $. Both results are collected in the ``hitchhiker guide'' of Bernardi et. al., see \cite[Theorem 4.3]{hitchhiker_Bernardi_Carlini_Catalisano_Gimigliano_Oneto_2018} and \cite[Theorem 4.5]{hitchhiker_Bernardi_Carlini_Catalisano_Gimigliano_Oneto_2018}.
	For large $ n $, our bound is of course strictly better than both previous results, and it matches the asymptotics of the Baur-Draisma-de Graaf conjecture. \Cref{fig:grassmann-bound-comparison} shows that it already overtakes the bounds from \cite{Rischter_thesis_2017} and \cite{Catalisano_Geramita_Gimigliano_2003} in the case $d=4$ at $ n = 20 $. 
\end{remark}

\begin{figure}
	\centering
\includegraphics[width=0.7\linewidth]{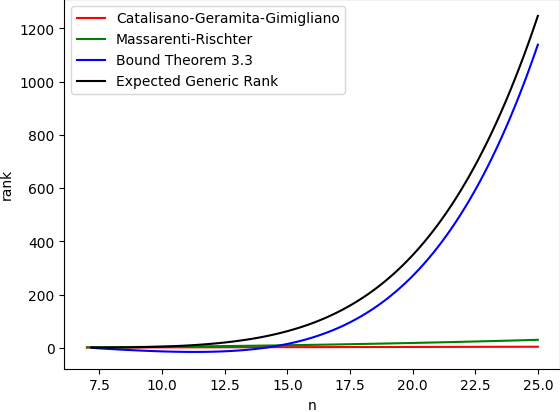}
	\caption{Comparison of the various bounds on secant nondefectivity of $\grass(d,n)$ when $d=5$.}
	\label{fig:grassmann-bound-comparison}
\end{figure}

From \Cref{thm:introduction-main-result}, similar identifiability results may also be derived for flag varieties. 

\begin{notation}
Let $(k_1,\dots,k_r;n)$ denote a sequence of increasing positive numbers of the form $k_1 \leq \dots \leq k_r \leq n$. We denote by $F(k_1,\dots,k_r;n)$ the \emph{Flag variety} parametrizing chains of the forms 
\[
V_1 \subset \dots \subset V_r \subset \mathbb{C}^n
\]
where every $V_i$ is a linear subspace of dimension $\dim(V_i)=k_i$.
\end{notation}

In particular the Flag variety can be seen as a subvariety of the product of Grassmannians $\prod_{i=1}^{r}\grass(k_i,n)$. Restricting the Segre product of the Plücker embedding to $F(k_1,\dots,k_r;n)$ exhibits the Flag as a subvariety of the irreducible $\GL_n^r$-module $\Gamma_a \subseteq \bigwedge^{k_1}\mathbb{C}^n \otimes \ldots \otimes \bigwedge^{k_r}\mathbb{C}^n$, see for instance \cite{Casarotti_Freire_Massarenti}.
Note that, if we define $k_0:=0$, we have 
\[
\dim F(k_1,\dots,k_r;n)=\sum_{j=1}^{r}(n-k_j)(k_j-k_{j-1})
\]
To the best of our knowledge the best bound for secant defectivity and identifiability for Flag varieties is given in \cite[Theorem 4.11]{Casarotti_Freire_Massarenti}, where the authors proved that under the assumption $n \geq 2k_r$ the Flag variety $F(k_1,\dots,k_r;n)$ is not $m$-defective for 
\begin{align}
m \leq \left(\frac{n}{k_r}\right)^{\lfloor \log_2(\sum k_j-1)\rfloor}
\end{align}
Using our result from \Cref{thm:introduction-main-result} we can give the following improvement:

\begin{theorem}
The Flag variety $F(k_1,\dots,k_r;n)$ is $m$-nondefective  (and $ (m-1) $-identifiable), if $m$ is bounded by:
\begin{align}
\frac{\prod_{i=1}^{r}{n \choose k_i}}{\sum_{j=1}^{r}(n-k_j)(k_j-k_{j-1})}-\sum_{j=1}^{r}(n-k_j)(k_j-k_{j-1}) 
\end{align}
Furthermore, the generic rank $m^{\circ}$ is at most
\begin{align}
\frac{\prod_{i=1}^{r}{n \choose k_i}}{\sum_{j=1}^{r}(n-k_j)(k_j-k_{j-1})}+\sum_{j=1}^{r}(n-k_j)(k_j-k_{j-1}) 
\end{align}

\end{theorem}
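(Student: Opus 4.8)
The plan is to apply \Cref{cor:generalized-nenashev-nondefectivity} (equivalently \Cref{thm:introduction-main-result}) to $V = F(k_1,\dots,k_r;n)$, in the same way that \Cref{thm:grassmannian-nondefective} was deduced for Grassmannians. Three things must be arranged: the ambient space must be an irreducible $G$-module $\mathcal{L}$; the flag variety must be an irreducible, $G$-invariant affine cone inside $\mathcal{L}$; and the two dimensions $N = \dim V$ and $\dim \mathcal{L}$ feeding the bounds must be identified. The identifiability clause then follows for free from \Cref{thm:massarenti-mella}, exactly as in the Grassmannian case, once nondefectivity is in hand.

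First I would pin down the group and the module. The chain condition $V_1 \subset \dots \subset V_r$ is preserved by the diagonal copy of $\GL_n$ inside $\GL_n^r$, so $F$ is invariant under this diagonal action, whereas the full $\GL_n^r$ does \emph{not} stabilize the flag. Under the Segre product of the Plücker embeddings the affine cone over $F$ sits inside $\bigwedge^{k_1}\C^n \otimes \dots \otimes \bigwedge^{k_r}\C^n$, and I would take $\mathcal{L} = \Gamma_a$ to be its linear span, i.e. the irreducible (Cartan) component generated by the highest weight vector, which is $\GL_n$-irreducible. Since $F$ is irreducible and stable under scaling, it is an affine cone, so all hypotheses of \Cref{cor:generalized-nenashev-nondefectivity} hold with $G$ the diagonal $\GL_n$ and $\mathcal{L} = \Gamma_a$.

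Next comes the dimension bookkeeping. The value $N = \dim F = \sum_{j=1}^{r}(n-k_j)(k_j-k_{j-1})$ (with $k_0 := 0$) is already recorded, and it supplies both the denominator and the additive correction term in the two displayed bounds. For the ambient dimension I would use $\dim \Gamma_a$; since $\Gamma_a \subseteq \bigwedge^{k_1}\C^n \otimes \dots \otimes \bigwedge^{k_r}\C^n$, we have $\dim \Gamma_a \le \prod_{i=1}^{r}\binom{n}{k_i}$, the quantity appearing as the numerator in the statement. Substituting $N$ and this dimension into parts (1) and (2) of \Cref{cor:generalized-nenashev-nondefectivity} yields the two inequalities; for the generic rank $m^{\circ}$ the replacement is immediate, as over-estimating $\dim \mathcal{L}$ only weakens an upper bound. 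Finally, because $F$ is smooth, its tangent map has finite fibres and is hence nondegenerate in the sense of \Cref{thm:massarenti-mella}, which upgrades $m$-nondefectivity to $(m-1)$-identifiability.

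The step I expect to require the most care is the representation-theoretic input: confirming that $\Gamma_a$ is genuinely irreducible as a module for the group under which $F$ is invariant, and computing (or sharply bounding) $\dim \Gamma_a$, e.g. via the hook-content formula for the associated Schur module. For $r \ge 2$ the flag spans only a proper Cartan component of the tensor product, so $\dim \Gamma_a < \prod_{i}\binom{n}{k_i}$; the larger product is a harmless over-estimate for the generic-rank bound, but for the nondefectivity bound the relevant ambient dimension is genuinely $\dim \Gamma_a$, and one must check that using $\prod_i \binom{n}{k_i}$ in its place is legitimate. Pinning down $\dim \Gamma_a$ precisely is therefore where the real work lies.
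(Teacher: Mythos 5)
Your route is exactly the paper's: the paper gives no separate argument for this theorem, it simply invokes \Cref{thm:introduction-main-result} (i.e.\ \Cref{cor:generalized-nenashev-nondefectivity}) for the cone over the Segre--Pl\"ucker embedded flag variety with $N=\sum_{j}(n-k_j)(k_j-k_{j-1})$, and gets identifiability from \Cref{thm:massarenti-mella} via smoothness, exactly as in \Cref{thm:grassmannian-nondefective}. So in terms of strategy you are fully aligned with the intended proof.

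However, the issue you flag at the end is not a loose end you failed to tie off; it is a genuine defect of the printed statement, and your analysis of it is correct. For $r\ge 2$ the flag cone spans only the Cartan component $\Gamma_a$, which is a \emph{proper} subspace of $\bigwedge^{k_1}\C^n\otimes\cdots\otimes\bigwedge^{k_r}\C^n$: already for $F(1,2;n)$ the cone consists of tensors $v\otimes(v\wedge w)$, which lie in the kernel of the contraction $\C^n\otimes\bigwedge^{2}\C^n\to\bigwedge^{3}\C^n$, so the span has codimension $\binom{n}{3}$. Consequently, what \Cref{thm:introduction-main-result} actually delivers is nondefectivity for $m\le \dim\Gamma_a/N-N$; replacing $\dim\Gamma_a$ by the larger number $\prod_i\binom{n}{k_i}$ \emph{strengthens} a nondefectivity claim, which is not legitimate. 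The discrepancy is real: for $F(1,2;n)$ and $n$ large, $\bigl(\prod_i\binom{n}{k_i}-\dim\Gamma_a\bigr)/N=\binom{n}{3}/(2n-3)$ far exceeds $2N$, so the printed range contains values of $m$ with $mN>\dim\Gamma_a$, where $\sigma_m F$ is trapped inside the span (hence defective relative to the ambient tensor space) and $(m-1)$-identifiability fails outright for parameter-count reasons. Only the generic-rank clause survives the substitution, as you observe, since there an over-estimate of $\dim\mathcal{L}$ merely weakens an upper bound. You are also right about the group: the paper's claim that $\Gamma_a$ is an irreducible $\GL_n^r$-module cannot be correct, because the full tensor product is already $\GL_n^r$-irreducible, so no proper subspace is a $\GL_n^r$-submodule; moreover the flag cone is stable only under the diagonal $\GL_n$, for which $\Gamma_a$ is indeed irreducible. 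Your proposal --- diagonal $\GL_n$, $\mathcal{L}=\Gamma_a$, numerator $\dim\Gamma_a$ (computable, e.g., by Weyl's dimension formula for highest weight $\omega_{k_1}+\cdots+\omega_{k_r}$) --- proves the corrected statement; the theorem as printed should be amended accordingly.
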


\subsection{Chow varieties}

In recent work by Torrance and Vannieuwenhoven, see \cite{Torrance_Vannieuwenhoven_2020}, \cite{Torrance_Vannieuwenhoven_2022}, it was shown that almost all secants of the Chow variety
\begin{align}
	\chow_{d}(\C^n) = \{\ell_{1}\cdot \ldots \cdot \ell_{d} \mid \ell_{1},\ldots,\ell_{d} \in S^1(\C^n) \}
\end{align}
are identifiable, in the case $ d = 3 $ of cubics. We can show a result for all degrees $ d\ge 3 $. 

\begin{theorem}
	The Chow variety $ \chow_{d}(\C^n) $  is $ m $-nondefective (and $ (m-1) $-identifiable), if $ m $ is at most $\frac{1}{d(n-1) + 1}\binom{n+d-1}{d} - d(n-1) - 1  $.
	Its generic rank is at most $ \frac{1}{d(n-1) + 1}\binom{n+d-1}{d} + d(n-1) + 1 $.
\end{theorem}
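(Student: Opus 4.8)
The plan is to recognise the statement as a direct instance of \Cref{cor:generalized-nenashev-nondefectivity}, combined with the identifiability transfer in \Cref{thm:massarenti-mella}, exactly as in the Grassmannian case; the only genuinely new ingredient is the nondegeneracy of the tangent map of the Chow variety.

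First I would take $ \mathcal{L} = S^d(\C^n) $, a well-known irreducible $ \GL(\C^n) $-module of dimension $ \binom{n+d-1}{d} $, and $ V = \chow_d(\C^n) $. The cone $ V $ is the image of the irreducible space $ (S^1(\C^n))^d $ under the product morphism $ (\ell_1,\dots,\ell_d)\mapsto \ell_1\cdots \ell_d $, hence irreducible, and it is $ \GL(\C^n) $-invariant because $ g\cdot(\ell_1\cdots\ell_d) = (g\ell_1)\cdots(g\ell_d) $. To apply the corollary I need $ N := \dim V $: by unique factorization the product morphism has generic fiber equal to the $ (d-1) $-dimensional torus of rescalings $ \ell_i\mapsto\lambda_i\ell_i $ with $ \prod_i\lambda_i=1 $ (up to a finite permutation group), so $ N = dn - (d-1) = d(n-1)+1 $. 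Substituting $ N $ and $ \dim \mathcal{L} $ into \Cref{cor:generalized-nenashev-nondefectivity} immediately yields both the nondefectivity range $ m \le \frac{\dim\mathcal{L}}{N} - N $ and the generic-rank bound $ \frac{\dim\mathcal{L}}{N} + N $ as claimed.

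For identifiability I would invoke \Cref{thm:massarenti-mella} with $ m-1 $ in place of $ m $: its numerical hypothesis and the hypothesis that $ V $ is not $ m $-defective are both supplied by the first part (in the range $ m \le \frac{\dim\mathcal{L}}{N} - N $), so the only condition left to check is that the tangent map of $ \chow_d(\C^n) $ is nondegenerate. This is the step I expect to be the main obstacle, since, unlike the Grassmannian, the Chow variety is singular and can be weakly defective, so nondegeneracy has to be verified by hand. I would first compute, at a general $ f=\ell_1\cdots\ell_d $ with $ \ell_1,\dots,\ell_d $ in general position,
\begin{align}
	T_f\chow_d(\C^n) = \sum_{i=1}^d \Big(\prod_{j\ne i}\ell_j\Big)\,S^1(\C^n),
\end{align}
and then identify the base locus of this linear system of degree-$ d $ hypersurfaces with the union $ \bigcup_{i<j}\{\ell_i=\ell_j=0\} $ of the pairwise intersections of the hyperplanes $ H_i=\{\ell_i=0\} $ in $ \P(\C^n) $.

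The final step is to reconstruct $ f $ from this base locus, which is where $ d\ge 3 $ enters. For $ d\ge 3 $ every $ H_i $ contains at least two of the codimension-two components $ H_i\cap H_j $, and for forms in general position the linear span of two such components is exactly $ H_i $; moreover one can tell which components share an index, since two codimension-two subspaces span a hyperplane precisely when they lie in a common $ H_i $, whereas components with disjoint index pairs span all of $ \P(\C^n) $. This recovers the unordered set $ \{H_1,\dots,H_d\} $, hence the $ \ell_i $ up to scaling and permutation, hence $ f $ up to a scalar, so the tangent map is generically finite and \Cref{thm:massarenti-mella} applies. The routine points that would still remain are the exact general-position and size constraints on $ n $ and the verification that the base locus is reduced of the stated shape; these do not affect the asymptotic range of validity.
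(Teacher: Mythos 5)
Your proposal is correct, and its numerical half is exactly the paper's argument: both apply \Cref{cor:generalized-nenashev-nondefectivity} (equivalently \Cref{thm:introduction-main-result}) to the irreducible, $\GL(\C^n)$-invariant cone $\chow_d(\C^n)$ inside the irreducible module $S^d(\C^n)$, with $N = d(n-1)+1$, which yields both stated bounds. Where you genuinely diverge is the identifiability step. The paper does not check nondegeneracy of the tangent map by hand: it cites Oeding's computation of the dual variety of the Chow variety, which (as noted by Torrance and Vannieuwenhoven) implies that $\chow_d(\C^n)$ is not $1$-weakly defective for $d\ge 3$, $n\ge 2$; this is stronger than (and implies) non-$1$-tangential-weak-defectivity, so \Cref{thm:massarenti-mella} applies. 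You instead prove generic injectivity of the tangent map directly: at a general $f=\ell_1\cdots\ell_d$ the tangent space $\sum_{i}(f/\ell_i)\,S^1(\C^n)$ determines its base locus $\bigcup_{i<j}\{\ell_i=\ell_j=0\}$, and for $d\ge 3$ the unordered hyperplane arrangement $\{H_1,\dots,H_d\}$, hence $[f]$, can be reconstructed from that locus. This is sound: generic injectivity gives a finite general fiber of the tangent map, i.e. a finite general $1$-tangential contact locus, which is what the Massarenti--Mella hypothesis requires. Your route buys self-containedness (no reliance on Oeding's dual-variety computation) and an explicit geometric picture of why the Gauss-type map is birational; the price is the general-position bookkeeping you flag, in particular your span criterion separating index-sharing pairs from disjoint pairs needs $n\ge 4$ (in $\P(\C^3)$ any two of the codimension-two components span a hyperplane), though for $n\le 3$ the theorem's bound is vacuous, so no cases are lost. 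The paper's citation route, by contrast, covers all $d\ge3$, $n\ge2$ uniformly and records the stronger property of non-weak-defectivity.
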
 
\begin{proof}
	The Chow variety is $ \GL $-invariant and lives in the Schur functor $ S^d(\C^n) $. The statement thus follows from \Cref{thm:introduction-main-result}, after plugging in the dimension of $ \chow_{d}(\C^n)$, which is $ (n-1)d + 1 $, and of $ S^d(\C^n) $. Oeding computed the dimension of the dual of the Chow variety in \cite{Oeding_2011}. As Torrance and Vannieuwenhoven pointed out in  \cite[Section 2.1]{Torrance_Vannieuwenhoven_2022}, Oeding's result implies that $ \chow_{d}(\C^n) $ is not $ 1 $-weakly defective for $ d\ge 3 $ and $ n\ge 2 $. Hence, the Chow variety is also not $ 1 $-tangentially weakly defective and \Cref{thm:massarenti-mella} implies $ (m-1) $-identifiability. 
\end{proof}

\subsection{Segre-Veronese varieties}

Given $ t\in \N $, vector spaces $ \underline{L} = (L_1,\ldots,L_{t})  $ of dimensions $ \underline{n} = (n_1,\ldots,n_t) $ and nonnegative integers $  \underline{d} = (d_1,\ldots,d_t) $, the Segre-Veronese variety on $ \underline{L} $ of multidegree $ \underline{d} $ is defined as 
\begin{align}
	\segreveronese_{\underline{d}}(\underline{L}) := \{ \ell_{1}^{d_1} \otimes \ldots \otimes \ell_{t}^{d_t} \mid \ell_{1} \in L_1^{\dual},\ldots,\ell_{t} \in L_t^{\dual} \}.
\end{align}
It lives in $ S^{d_1} (L_1) \otimes \ldots \otimes S^{d_t} (L_t) $. This space is naturally endowed with the action of $ G := \GL(L_1)\times \ldots \times \GL(L_t) $.

\begin{theorem}\label{our segre-veronese}
	The Segre-Veronese variety of multidegree $ \underline{d} $ and dimensions $ \underline{n} $ is $ m $-nondefective (and $ (m-1) $-identifiable), if $ m $ is bounded by
	\begin{align}
		\frac{1}{ n - t + 1}\binom{n_1 + d_1 - 1}{d_1} \cdots \binom{n_t + d_t - 1}{d_t} - (n - t + 1),
	\end{align}
	where $ n := n_1 + \ldots + n_t $. Furthermore, the generic rank $ m^{\circ} $ is at most 
	\begin{align}
		\frac{1}{n - t + 1}\binom{n_1 + d_1 - 1}{d_1} \cdots \binom{n_t + d_t - 1}{d_t} + (n - t + 1).
	\end{align}
\end{theorem}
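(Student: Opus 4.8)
The plan is to derive this theorem as a direct application of \Cref{thm:introduction-main-result} (equivalently \Cref{cor:generalized-nenashev-nondefectivity}), exactly as was done for the Grassmannian and Chow varieties. The core verification consists of three ingredients: that the ambient space is an irreducible $G$-module, that the Segre-Veronese variety is a $G$-invariant irreducible affine cone sitting inside it, and that one can correctly compute its dimension $N$ and the dimension of $\mathcal{L}$ so as to substitute them into the two bounds of \Cref{cor:generalized-nenashev-nondefectivity}.

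First I would record the ambient space and group action. The variety $\segreveronese_{\underline{d}}(\underline{L})$ lives in $\mathcal{L} = S^{d_1}(L_1) \otimes \ldots \otimes S^{d_t}(L_t)$, which is an irreducible module for $G = \GL(L_1) \times \ldots \times \GL(L_t)$; this is the standard fact that tensor products of irreducible $\GL$-representations (here the symmetric powers, each irreducible for the corresponding factor) are irreducible for the product group. The Segre-Veronese variety is the (affine cone over the) image of the map sending $(\ell_1,\ldots,\ell_t)$ to $\ell_1^{d_1} \otimes \ldots \otimes \ell_t^{d_t}$, so it is an irreducible affine cone, and it is visibly closed under the $G$-action since $g$ acts factorwise on each $\ell_i$.

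Next I would compute the two relevant dimensions. We have $\dim \mathcal{L} = \prod_{i=1}^{t} \binom{n_i + d_i - 1}{d_i}$, since $\dim S^{d_i}(L_i) = \binom{n_i + d_i - 1}{d_i}$. For the variety itself, the affine cone has dimension $N = (n_1 - 1) + \ldots + (n_t - 1) + 1 = n - t + 1$, where $n = n_1 + \ldots + n_t$: each projective factor $\P(L_i^{\dual})$ contributes $n_i - 1$ parameters, and the overall scaling contributes the extra $+1$ for the affine cone. Plugging $N = n - t + 1$ and this $\dim\mathcal{L}$ into \Cref{cor:generalized-nenashev-nondefectivity}(1) yields the nondefectivity bound $m \leq \frac{\dim\mathcal{L}}{N} - N$ claimed, and part (2) yields the generic rank bound $m^\circ \leq \frac{\dim\mathcal{L}}{N} + N$.

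Finally, for the identifiability statement I would invoke \Cref{thm:massarenti-mella}, which requires the tangent map of $\segreveronese_{\underline{d}}(\underline{L})$ to have finite fibers (nondegeneracy). The main obstacle, and the only step beyond routine bookkeeping, is to justify this nondegeneracy of the tangent map, together with non-degeneracy of the variety in $\mathcal{L}$; once those are in hand, $m$-nondefectivity upgrades to $(m-1)$-identifiability just as in \Cref{thm:grassmannian-nondefective}. I expect the tangent-map nondegeneracy to follow either from smoothness-type considerations on the parametrizing product of projective spaces or from a citation to the identifiability literature on Segre-Veronese varieties, analogous to how the Grassmannian case used smoothness and the Chow case used Oeding's dual-variety computation.
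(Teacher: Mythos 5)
Your proposal is correct and follows essentially the same route as the paper: verify that $\mathcal{L} = S^{d_1}(L_1)\otimes\ldots\otimes S^{d_t}(L_t)$ is an irreducible $G$-module for $G=\GL(L_1)\times\ldots\times\GL(L_t)$, note the $G$-invariance and dimension $n-t+1$ of the Segre-Veronese cone, plug into \Cref{cor:generalized-nenashev-nondefectivity}, and invoke \Cref{thm:massarenti-mella} for identifiability. Your only hedge, how to get nondegeneracy of the tangent map, is resolved in the paper exactly by your first suggested option: Segre-Veronese varieties are smooth, just as in the Grassmannian case.
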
 
\begin{proof}
	As outlined before, $ \mathcal{L} :=  S^{d_1} (L_1) \otimes \ldots \otimes S^{d_t} (L_t) $ has the structure of a $ G $-module, where $ G := \GL(L_1)\times \ldots \times \GL(L_t) $. But $ \mathcal{L} $ is in fact an irreducible $ G $-module, since it is the tensor product of the irreducible $ \GL(L_i) $-modules $ S^{d_i}(L_i) $, for $ i \in  \{1,\ldots,t\} $.  
	Note that $ \segreveronese_{\underline{d}}(\underline{L}) $ is invariant with respect of the action of $ G $, and of dimension $ n_1 + \ldots + n_t - t + 1$. Therefore, \Cref{thm:introduction-main-result} is applicable. Calculating the dimensions of $ \mathcal{L} $ and the Segre-Veronese variety explicitly yields the result on nondefectivity. Since Segre-Veronese varieties are smooth, \Cref{thm:massarenti-mella} yields $ (m-1) $-identifiability.
\end{proof}

To the best of our knowledge the actual best bound in the literature is due to Laface-Massarenti-Rischter in \cite[Theorem 1.1]{LaMaRi2022}. They give an asymptotical sharp bound on $m$ of the form:

\begin{align}
\frac{d_j}{(n_j+d_j-1)\cdot (n - t + 1)} \binom{n_1 + d_1 - 1}{d_1} \cdots \binom{n_t + d_t - 1}{d_t}
\end{align}
where 
\[
\frac{n_j-1}{d_j}=\max_{1 \leq i \leq t}\{\frac{n_i-1}{d_i}\}.
\]

Our bound from Theorem \ref{our segre-veronese} however covers asymptotically more cases then Laface-Massarenti-Rischter one. In Figure \ref{fig:segre-veronese-bound-comparison} we make the comparison between the two bounds in the case of $3$ factors Segre-Veronese of fixed degree $\underline{d}=(1,1,2)$ and fixed dimension $n$.

\begin{figure}
	\centering
\includegraphics[width=0.7\linewidth]{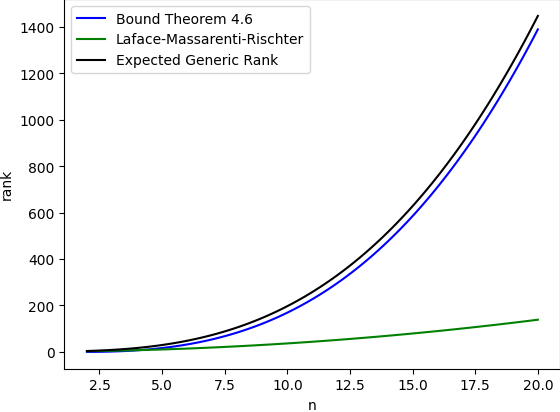}
	\caption{Comparison of the various bounds on secant nondefectivity of $s\nu_{\underline{d}}(\underline{n})$ when $\underline{d}=(1,1,2)$ and $\underline{n}=(n,n,n)$.}
	\label{fig:segre-veronese-bound-comparison}
\end{figure}

\subsection{The Gaussian moment variety}\label{sec:gm-ident}
Gaussian mixtures are an ubiquitous model in Machine Learning. Their identifiability is a long-standing open problem, with the univariate case dating back to Pearson, see \cite{Pearson_1900}. We refer to \cite{Amendola_Faugere_Sturmfels_2016} and \cite{Amendola_Ranestad_Sturmfels_2018} for some historical remarks. They are closely connected to secants of the \emph{Gaussian moment variety}, defined as follows. 

\begin{defi}\label{def:gm-variety}
	Let $ n, d\in \N $. We call the closure of the image of 
	\begin{align}
			s_{d}\colon  S^1(\C^n) \times S^2(\C^n) \to S^{d}(\C^n), (\ell, q)\mapsto \exp\left(\ell + \frac{q}{2}\right)_{=d},
		\end{align}
	the degree-$ d $ \emph{Gaussian moment variety} $ \GM_{d}(\C^n) $ on $ \C^n $. Here, $ \exp $ is to be understood as a formal power series on $ \C^n $ and the subscript denotes its $ d $-homogeneous part. 
\end{defi}

The parameter $ \ell $ is a linear form, which corresponds to the mean of the Gaussian, while the parameter $ q $ is a quadratic form, corresponding to the covariance matrix. 
In \cite{Blomenhofer_Casarotti_Michalek_Oneto_2022} and \cite{Taveira_Blomenhofer_GM6_2023}, the authors gave thorough motivations of the Gaussian moment variety and explained the connection between the statistical estimation problem for Gaussian mixtures and the algebraic identifiability problem for the Gaussian moment variety. In particular, they also showed why for generic identifiability, the constrains of realness and positive-definiteness of $ \ell $ and $ q $, respectively, may be ignored. We settle most cases of Gaussian identifiability for high-dimensional models in the degrees $ d = 5,\ldots,9 $. Note that in applications, the lower degrees are the most relevant ones. Degree $ 5 $ is minimal for any nontrivial identifiability result.

\begin{theorem}\label{thm:gm-nondefective}
	For $ d\ge 5 $ and $ n\ge 2 $, the following hold: 
	\begin{enumerate}
		\item The Gaussian moment varieties $ \mathrm{GM}_d(\C^n) $ are $ m $-nondefective, as long as 
		\begin{align*}
			m \leq \frac{\dim S^d(\C^n)}{\dim \mathrm{GM}_d(\C^n) } - \dim \mathrm{GM}_d(\C^n) = \frac{2{n+d-1 \choose d}}{n (n + 3)} - \frac{1}{2} n (n + 3). 
		\end{align*}  
		\item In addition, the generic rank with respect to the Gaussian moment variety is at most 
		\begin{align*}
			\frac{{n+d-1 \choose d}}{\dim \mathrm{GM}_d(\C^n) } + \dim \mathrm{GM}_d(\C^n).
		\end{align*}
		\item For $ d = 5,\ldots,9 $, $ m $-nondefectivity of $ \mathrm{GM}_d(\C^n) $ implies $ (m-1) $-identifiability. 
	\end{enumerate}
\end{theorem}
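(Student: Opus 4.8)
The plan is to obtain (1) and (2) directly from \Cref{cor:generalized-nenashev-nondefectivity} and to reduce (3) to \Cref{thm:massarenti-mella}; the only genuine work lies in checking the hypotheses of these two results for the variety $\GM_d(\C^n)$.

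For (1) and (2), I would take $G = \GL(\C^n)$ and $\mathcal{L} = S^d(\C^n)$, which is the standard irreducible $\GL(\C^n)$-module. The one point that needs care is that $\GM_d(\C^n)$ is a $G$-invariant, irreducible affine cone. Invariance follows from the identity $g\cdot s_d(\ell,q) = s_d(g\ell, gq)$: a linear substitution of variables is a graded algebra automorphism, so it commutes with the formal exponential and with truncation to the degree-$d$ part, while $g\ell$ remains a linear form and $gq$ a quadratic form. Hence the image of $s_d$ is $G$-stable, and so is its closure. Irreducibility is immediate, since $\GM_d(\C^n)$ is the closure of the image of the irreducible parameter space $S^1(\C^n)\times S^2(\C^n)$ under a morphism; and the cone property holds because the scalar matrices $cI \in \GL(\C^n)$ act on $S^d(\C^n)$ by a power of $c$ that sweeps out all of $\C^\ast$, while $0$ lies in the closure. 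The dimension equals the parameter count $N = n + \binom{n+1}{2} = \frac{1}{2}n(n+3)$, using that $s_d$ is generically finite for $d \ge 3$. Substituting $\dim\mathcal{L} = \binom{n+d-1}{d}$ and this $N$ into \Cref{cor:generalized-nenashev-nondefectivity} yields precisely the two displayed bounds.

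For (3), I would apply \Cref{thm:massarenti-mella} with its integer taken to be $m-1$. Its Condition (1), namely $(m-1)N \le \dim\mathcal{L}$, holds throughout the nondefectivity range of part (1), where already $mN \le \dim\mathcal{L}$; and its Condition (3) is exactly the assumed $m$-nondefectivity. Everything therefore reduces to Condition (2): that the tangent map $T\colon \P(\GM_d(\C^n)_{\smooth}) \to \grass(N, \mathcal{L})$ has finite fibers, equivalently that $\GM_d(\C^n)$ is not $1$-tangentially weakly defective. Once this is known, \Cref{thm:massarenti-mella} upgrades $m$-nondefectivity to $(m-1)$-identifiability without further work.

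Verifying this nondegeneracy is the main obstacle, and it is exactly where the restriction $5 \le d \le 9$ enters. At a general point the affine tangent space has the explicit form $T_{(\ell,q)}\GM_d(\C^n) = S^1(\C^n)\cdot g_{d-1} + S^2(\C^n)\cdot g_{d-2}$, where $g_k := s_k(\ell,q)$, and the goal is to show that a general such subspace determines $(\ell,q)$ up to finite ambiguity, so that the Gauss map is generically finite. I would establish this by adapting the (tangential) nondefectivity arguments of \cite{Blomenhofer_Casarotti_Michalek_Oneto_2022} and \cite{Taveira_Blomenhofer_GM6_2023}, which already yield the required finiteness of the tangent map for $\GM_d(\C^n)$ in these degrees: degree $5$ is the smallest degree for which the statement is nonvacuous, and degree $9$ is the largest one currently reachable by the method. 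The hard part is thus entirely concentrated in this tangential nondefectivity; parts (1) and (2) are formal consequences of the general theory once invariance, irreducibility, and the dimension count are in place.
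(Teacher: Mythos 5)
Your proposal is correct and follows essentially the same route as the paper: parts (1) and (2) are obtained by checking that $\GM_d(\C^n)$ is an irreducible, $\GL(\C^n)$-invariant cone of dimension $\frac{1}{2}n(n+3)$ in the irreducible module $S^d(\C^n)$ and invoking \Cref{cor:generalized-nenashev-nondefectivity}, while part (3) reduces via \Cref{thm:massarenti-mella} to nondegeneracy of the tangent map, which the paper likewise does not reprove but cites from \cite[Proposition 2.13]{Taveira_Blomenhofer_GM6_2023} for $d=5,\ldots,9$. Your write-up is in fact somewhat more careful than the paper's, e.g.\ in verifying $G$-invariance and the condition $(m-1)N \le \dim\mathcal{L}$ of Massarenti--Mella explicitly.
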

\begin{proof}
	The first two claims are a direct consequence of \Cref{thm:introduction-main-result}. Note that the dimension of $ \GM_{d}(\C^n) $ is $ n + \binom{n+1}{2} $, since it is parameterized by $ S^1(\C^n) \oplus S^2(\C^n) $. This value can also be written as $ \frac{1}{2} n (n+3) $. For the third claim, we use \Cref{thm:massarenti-mella}, which shows that $ m $-nondefectivity implies $ (m-1) $-identifiability, if the tangent map is nondegenerate. The tangent map condition was verified for $ d = 5,\ldots,9 $ with a partially computer-assisted proof in \cite[Proposition 2.13]{Taveira_Blomenhofer_GM6_2023}.  
\end{proof}

\begin{remark}\label{rem:asymptotic-optimality-gaussians}
	For $ d\leq 4 $, the Gaussian moment varieties are already $ 2 $-defective, cf. for instance \cite{Taveira_Blomenhofer_GM6_2023}. On the other hand, for $ d\geq 5 $, the dimension of $ \mathrm{GM}_d(\C^n) $ grows of smaller order in $ n $ than the parameter counting rank. Therefore, \Cref{thm:gm-nondefective} asymptotically covers almost all interesting cases of nondefectivity, cf. \Cref{rem:asymptotic-optimality}. We conjecture that $ \mathrm{GM}_d(\C^n) $ has nondegenerate tangent map for all $ d\ge 5 $ and $ n\ge 2 $. A proof of this would extend \Cref{thm:gm-nondefective}(3) to all degrees $ d\ge 5 $. 
\end{remark}

\subsubsection{Low-rank Gaussians} We can prove a similar result for rank-$ r $ Gaussians.  

\begin{cor}\label{cor:rank-r-gaussians}
	Let $ n, m, d, r\in \N $ and denote 
	\begin{align}
		\GM_{d, r}(\C^n) = \{s_{d}(\ell, q) \mid \rk q \le r \}.
	\end{align}
	Then, $ \GM_{d, r}(\C^n) $ is $ m $-nondefective, as long as 
	\begin{align}
		m \leq \frac{\dim S^d(\C^n)}{M_r} - M_r,  
	\end{align}
	where $ M_r :=  (r+1)n - \binom{r}{2} $. The generic rank is at most $ \frac{\dim S^d(\C^n)}{M_r} + M_r $. 
\end{cor}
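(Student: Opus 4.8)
The plan is to realize $\GM_{d,r}(\C^n)$ as an irreducible, $\GL(\C^n)$-invariant affine cone in the irreducible $\GL(\C^n)$-module $S^d(\C^n)$, whose dimension equals $M_r$, and then to quote \Cref{thm:introduction-main-result} exactly as in the proof of \Cref{thm:gm-nondefective}. I would set $W := S^1(\C^n)\times\{q\in S^2(\C^n) : \rk q\le r\}$, so that $\GM_{d,r}(\C^n)=\overline{s_d(W)}$. Since the symmetric determinantal locus $\{\rk q\le r\}$ is irreducible, $W$ is irreducible, and hence so is its image-closure $\GM_{d,r}(\C^n)$.

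Next I would verify the cone and equivariance properties. For the cone property I would use the rescaling $(\ell,q)\mapsto(t\ell,t^2q)$: it preserves $W$, since scaling a quadratic form does not change its rank, and it satisfies $s_d(t\ell,t^2q)=t^d s_d(\ell,q)$ because every monomial $\ell^k q^j$ occurring in $s_d$ has $k+2j=d$. As $\C$ is algebraically closed, $\{t^d:t\in\C\}=\C$, so $\C\cdot\GM_{d,r}(\C^n)\subseteq\GM_{d,r}(\C^n)$. For $\GL(\C^n)$-invariance, the standard substitution action moves $q$ by congruence and therefore preserves $\rk q$, so $W$ is stable; and since $\GL(\C^n)$ acts by algebra automorphisms commuting with $\exp$ and with the projection onto the degree-$d$ part, the parameterization is equivariant, $g\cdot s_d(\ell,q)=s_d(g\ell,gq)$. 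Thus $\GM_{d,r}(\C^n)$ is $\GL(\C^n)$-invariant, and $S^d(\C^n)$ is an irreducible $\GL(\C^n)$-module.

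The dimension computation is the crux. Counting parameters gives $\dim W=n+\bigl(nr-\binom{r}{2}\bigr)=(r+1)n-\binom{r}{2}=M_r$, so $\dim\GM_{d,r}(\C^n)\le M_r$ automatically. I expect the reverse inequality, i.e. generic finiteness of $s_d|_W$, to be the main obstacle. I would approach it by differentiating the parameterization, obtaining $ds_d(\dot\ell,\dot q)=\dot\ell\cdot s_{d-1}(\ell,q)+\tfrac12\dot q\cdot s_{d-2}(\ell,q)$, and proving that this linear map is injective on the tangent space $T_{(\ell,q)}W=S^1(\C^n)\oplus T_q\{\rk\le r\}$ for a generic $(\ell,q)\in W$. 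Working in coordinates where $q$ is diagonal of rank $r$ makes the tangent space to the determinantal locus explicit, and I expect the injectivity to reduce to the full-rank statement $\dim\GM_d=\tfrac12 n(n+3)$ already used for the ordinary Gaussian moment variety.

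Finally, with $N=M_r$, \Cref{thm:introduction-main-result} immediately yields $m$-nondefectivity for $m\le \frac{\dim S^d(\C^n)}{M_r}-M_r$ and the generic-rank bound $\frac{\dim S^d(\C^n)}{M_r}+M_r$. I would also remark that the nondefectivity half is insensitive to the exact dimension: since $x\mapsto \frac{\dim S^d(\C^n)}{x}-x$ is decreasing, the automatic bound $\dim\GM_{d,r}(\C^n)\le M_r$ already forces nondefectivity in the stated range, so that only the generic-rank half genuinely relies on the equality $\dim\GM_{d,r}(\C^n)=M_r$.
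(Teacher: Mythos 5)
Your proposal is correct and takes essentially the same approach as the paper: the paper's proof likewise counts parameters to get $\dim \GM_{d,r}(\C^n) = M_r = (r+1)n - \binom{r}{2}$, observes that $\GM_{d,r}(\C^n)$ is an irreducible, $\GL(\C^n)$-invariant cone in the irreducible module $S^d(\C^n)$, and cites \Cref{cor:generalized-nenashev-nondefectivity}. If anything, you are more careful than the paper on the one delicate point: the paper asserts the dimension by parameter counting (for $d\ge 3$) without addressing generic finiteness of $s_d$ restricted to the low-rank parameter space, whereas you flag that issue explicitly, and your closing observation --- that the nondefectivity half needs only the automatic inequality $\dim \GM_{d,r}(\C^n) \le M_r$, the equality being required only for the generic-rank bound --- is a genuine refinement not present in the paper's argument.
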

\begin{proof}
	Counting parameters, we have that for $ d\ge 3 $, $ \GM_{d}(\mathcal{G}_{r}) $ has dimension equal to $ n $ (for the linear form) plus $ r(n-r) + \binom{r+1}{2} = rn - \binom{r}{2} $ for the rank-$ r $ covariance form. This sum equals $ M_r = (r+1) n - \binom{r}{2} $. Clearly, $ \GM_{d}(\mathcal{G}_{r}) $ is a $ \GL(\C^n) $-invariant, irreducible variety. The claim thus follows by \Cref{cor:generalized-nenashev-nondefectivity}. 
\end{proof}

\begin{remark}
	\Cref{cor:rank-r-gaussians} treats one example in the large category of \emph{Gaussian models}. These models are very common in applications and they arise from considering some subvariety $ \mathcal{G}\subseteq S^2(\C^n) $. The major motivation for a restriction to subvarieties is to reduce the number of parameters. This parameter reduction can have a significant impact on defectivity: Indeed, \Cref{cor:rank-r-gaussians} implies that, if the rank of the covariance matrices is an absolute constant, then a mixture of $ m = \mathcal{O}(n^2) $ general Gaussians is finite-to-one identifiable from its degree $ 3 $ moments; a sharp contrast to the case of full Gaussians, where moments of degree $ 5 $ are necessary. We believe that nondefectivity results for the rank-$ r $ Gaussian mixture model were not known before.
\end{remark}

\subsection{Other $G$-varieties}

In this section, we will see two applications of Theorem \ref{thm:introduction-main-result} for the case of other groups. Let $\C^{2n}$ be a $2n$-dimensional complex vector space and let $Q$ and $\omega$, respectively, a non-degenerate quadratic form and a non-degenerate skew symmetric form. 
Let $G=\SO(\C^{2n})$ be the  special orthogonal group with respect to $ Q $ and let $G=\Sp(\C^{2n})$ denote the symplectic group with respect to $ \omega $. We briefly recall the notations and the main result from \cite{Freire_Massarenti_Rischter_2020}.

\begin{defi}
	A subspace $ L \subseteq \C^{2n} $ of dimension $ n $ is called $ p $-\emph{isotropic} with respect to the nondegenerate bilinear form $ p $, if for all $ x\in L $, $ p(x, x) = 0 $. 
\end{defi}

We now introduce two special subvarieties of the Grassmannian that are invariants under the two groups introduced above.

\begin{defi}\label{lagrangian-spinor varieties}
	\noindent
	\begin{enumerate}[(a)]
		\item The \emph{Lagrangian Grassmannian} $ \mathcal{LG}(n,2n) \subset \grass(n,2n) $ is the variety of $n$-dimensional $\omega$-isotropic linear subspaces of $\C^{2n}$. It is $\Sp(\C^{2n})$-invariant and has dimension $\frac{n(n+1)}{2}$. 
		\item The \emph{Spinor variety} $\mathcal{S}_n \subset \grass(n,2n)$ is the variety of $n$-dimensional $Q$-isotropic linear subsapces of $\C^{2n}$. It is $\SO(\C^{2n})$-invariant of dimension $ \frac{n(n-1)}{2} $.
	\end{enumerate}
\end{defi}

The restriction of the Plücker map to $\mathcal{LG}(n,2n)$ embeds the Lagrangian Grassmannian in the irreducible $\Sp(\C^{2n})$-module $\C^{2n}_{\omega}$. The latter is defined as the complement of the submodule $\omega \wedge \bigwedge^{n-2} \C^{2n} \subset \bigwedge^{n}\C^{2n}$. It holds
\begin{align}
	\dim \C^{2n}_{\omega} = \frac{1}{2} \binom{2n}{n} + 2^{n-1}-1 
\end{align}

Recall that $\mathcal{S}_n$ has two connected isomorphic components $\mathcal{S}_n^{+}$ and $\mathcal{S}_n^{-}$. Here, we implicitely make the choice $\mathcal{S}_n=\mathcal{S}_n^{+}$. Note that the restriction of the Plücker map embeds the Spinor variety into an irreducible $\SO(\C^{2n})$-module $\bigwedge^{n}\C^{2n}_{+}$, where $\bigwedge^{n}\C^{2n}_{+} \oplus \bigwedge^{n}\C^{2n}_{-}=\bigwedge^{n}\C^{2n}$ is the natural $\SO(\C^{2n})$-decomposition. 
The embedding $\mathcal{S}_n \to \bigwedge^{n}\C^{2n}_{+}$ is not the minimal one: Indeed, there is another embedding $\mathcal{S}_n \to \Delta$, where $\Delta$ is the half-spin $\SO(\C^{2n})$ representation. We have
\begin{align}
\dim \bigwedge^{n}\C^{2n}_{+}=\frac{1}{2}{2n \choose n} \text{ and } \dim \Delta=2^{n-1}	
\end{align}

The authors in \cite{Freire_Massarenti_Rischter_2020} proved that $\mathcal{LG}(n,2n) \subset \C^{2n}_{\omega}$ is not $m$-defective for $m $ at most $ \lfloor \frac{n+1}{2}   \rfloor$. Moreover, they showed that $\mathcal{S}_n \subset \bigwedge^{n}\C^{2n}_{+}$ is not $m$-defective for $m \leq \lfloor \frac{n}{2} \rfloor$ and that $\mathcal{S}_n \subset \Delta$ is not $m$-defective for $m \leq \lfloor \frac{n+2}{4} \rfloor$. 
Using Theorem \ref{thm:introduction-main-result}, we are able to significantly improve on the bounds of Massarenti and Rischter:

\begin{theorem}\label{thm:Lagrangian-Spinor}
The Lagrangian Grassmannian $\mathcal{LG}(n,2n)$ is $m$-nondefective (and $(m-1)$-identifiable) in the $\C^{2n}_{\omega}$ embedding, if $m$ is bounded by 
\begin{align}
\frac{1}{n(n+1)} \left(\binom{2n}{n} + 2^{n}-2\right) -\frac{n(n+1)}{2} 
\end{align}
Furthermore, the generic rank $m^{\circ}$ is at most
\begin{align}
\frac{1}{n(n+1)} \left(\binom{2n}{n} + 2^{n}-2\right) +\frac{n(n+1)}{2}
\end{align}
Similarly, the Spinor variety $\mathcal{S}_n$ is $m$-nondefective (and $(m-1)$-identifiable) in the $\bigwedge^{n}\C^{2n}_{+}$ embedding, if $m$ is bounded by
\begin{align}
\frac{1}{n(n-1)}{2n \choose n}-\frac{n(n-1)}{2}
\end{align}
and in the $\Delta$ embedding, if $m$ is bounded by
\begin{align}
\frac{2^{n}}{n(n-1)}-\frac{n(n-1)}{2}
\end{align}
An analogous statement follows for the generic rank $m^{\circ}$.
\end{theorem}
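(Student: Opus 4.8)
The plan is to treat the three embeddings in parallel, since in each case the statement reduces to a direct application of \Cref{thm:introduction-main-result} for nondefectivity and the generic rank, followed by \Cref{thm:massarenti-mella} for identifiability. First I would fix the triple $(G,\mathcal{L},V)$ in each case: for the Lagrangian Grassmannian, $G=\Sp(\C^{2n})$, $\mathcal{L}=\C^{2n}_{\omega}$ and $V=\mathcal{LG}(n,2n)$ with $N=\tfrac{n(n+1)}{2}$; for the spinor variety in the first embedding, $G=\SO(\C^{2n})$, $\mathcal{L}=\bigwedge^{n}\C^{2n}_{+}$ and $V=\mathcal{S}_n$ with $N=\tfrac{n(n-1)}{2}$; and in the minimal embedding only $\mathcal{L}=\Delta$ changes. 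By \Cref{lagrangian-spinor varieties} and the surrounding discussion, each $\mathcal{L}$ is an irreducible $G$-module and each $V$ is a $G$-invariant, irreducible affine cone (both varieties are homogeneous under their groups, hence irreducible), so the hypotheses of \Cref{thm:introduction-main-result} are satisfied.

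With the setup fixed, the nondefectivity and generic-rank bounds follow by substituting the dimensions recorded in the excerpt into \Cref{thm:introduction-main-result}. In the Lagrangian case, using $\dim\C^{2n}_{\omega}=\tfrac{1}{2}\binom{2n}{n}+2^{n-1}-1$ and $N=\tfrac{n(n+1)}{2}$, the ratio $\dim\mathcal{L}/N$ simplifies, after clearing the common factor $\tfrac12$, to $\tfrac{1}{n(n+1)}\bigl(\binom{2n}{n}+2^{n}-2\bigr)$, and subtracting (resp.\ adding) $N$ yields the stated nondefectivity bound (resp.\ generic-rank bound). The two spinor cases are identical in structure, using $\dim\bigwedge^{n}\C^{2n}_{+}=\tfrac12\binom{2n}{n}$ and $\dim\Delta=2^{n-1}$ together with $N=\tfrac{n(n-1)}{2}$; this is entirely routine arithmetic.

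For identifiability I would argue exactly as in the proof of \Cref{thm:grassmannian-nondefective}. Both $\mathcal{LG}(n,2n)$ and $\mathcal{S}_n$ are smooth and nonlinear, so their Gauss (tangent) maps are finite, giving Condition (2) of \Cref{thm:massarenti-mella}; in the nondefective range $mN<\dim\mathcal{L}$ gives Condition (1), and the $m$-nondefectivity just established gives Condition (3). Applying \Cref{thm:massarenti-mella} with $m-1$ in place of $m$ then yields $(m-1)$-identifiability in each embedding, and the generic rank in the $\Delta$ case is read off from the second half of \Cref{thm:introduction-main-result} in the same way. I expect no genuine obstacle here: the entire argument is bookkeeping of the irreducible modules and their dimensions, and the only conceptual input beyond \Cref{thm:introduction-main-result} is that smoothness of the two homogeneous varieties forces the tangent map to be nondegenerate, which is the classical finiteness of the Gauss map for a smooth nonlinear projective variety.
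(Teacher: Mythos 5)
Your proposal is correct and follows essentially the same route as the paper: the paper proves this theorem by applying \Cref{thm:introduction-main-result} to the triples $(\Sp(\C^{2n}),\C^{2n}_{\omega},\mathcal{LG}(n,2n))$, $(\SO(\C^{2n}),\bigwedge^{n}\C^{2n}_{+},\mathcal{S}_n)$ and $(\SO(\C^{2n}),\Delta,\mathcal{S}_n)$ with the recorded dimensions, and deduces $(m-1)$-identifiability from \Cref{thm:massarenti-mella} precisely because both varieties are smooth, hence have nondegenerate tangent map. Your dimension computations and the substitution $m-1$ for $m$ in Massarenti--Mella match the paper's (largely implicit) argument.
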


Identifiability follows again from \cite[Theorem 1.5]{Massarenti_Mella_2022}, since both spinor varieties and Lagrangian Grassmannians are smooth. Galgano completely determined the locus of identifiability for the second secant of the spinor variety \cite{galgano2023identifiability}.

\subsection{Asymptotics and critical degree}

\begin{remark}\label{rem:asymptotic-optimality}
	The examples from \Cref{sec:intro} will be discussed thoroughly in \Cref{sec:applications}. However, note that the examples from the introduction have slightly more in common than just $ \GL $-invariance: Veronese varieties, powers-of-forms varieties, Chow varieties, Grassmann varieties and Gaussian moment varieties are images $ \varphi_d(L_d) $ (under equivariant maps) of a linear space $ L_{d} = L_{d}(\C^n) $ into spaces $ \mathcal{L}_d(\C^n) $ of $ d $-tensors on $ \C^n $.\footnote{In fact, $ L_{d}(\C^n) $ has the structure of a bounded-degree polynomial functor, for fixed $ d $. } In all of the above cases, the dimension of $ L_{d}(\C^n) $ is a polynomial $ p_{d}(n) $ in $ n $, whose degree $ d_0 := \deg p_d $ does not depend on $ d $. 
	
	Concretely, we have $ d_0 = 1 $ for the Veronese, Chow and Grassmannian varieties. We have $ d_0 = 2 $ for the Gaussian moment variety and $ d_0 = k $ for the variety of $ d $-th powers of $ k $-forms. We may now look at the bounds in \Cref{thm:introduction-main-result} and understand their asymptotical behaviour for $ n\to \infty $, just in terms of $ d_0 $. Indeed, if $ d > 2 d_0 $, then for large $ n $, almost all secants of $ \varphi_d(L_{d}(\C^n)) $ will be identifiable. The number of cases not covered by our theorem will be asymptotically negligible. The generic rank will be very close to the parameter count $ \frac{\dim \mathcal{L}_d(\C^n)}{\dim \varphi_d(L_{d}(\C^n)) } $. 
	
	On the other hand, for $ d < 2 d_0 $, our theorem becomes trivial (at least for large $ n $, but in practice often really fast). This is necessary, since in all of the examples above, for $ d\le d_0 $, already the second secant is defective. 
	Summarized, it can be said that all of the above examples have a ``\emph{critical degree}'' $ 2d_0 $, which separates the $ \varphi_{d} $ with mostly defective secants from those with mostly nondefective secants. 
\end{remark}



	\bibliography{bibML}

\begin{thebibliography}{10}

\bibitem{Adlandsvik_1988}
Bjørn Adlandsvik.
\newblock Varieties with an extremal number of degenerate higher secant
  varieties.
\newblock {\em Journal für die reine und angewandte Mathematik},
  1988(392):16--26, 1988.

\bibitem{Amendola_Faugere_Sturmfels_2016}
Carlos Amendola, Jean-Charles Faugere, and Bernd Sturmfels.
\newblock Moment varieties of {G}aussian mixtures.
\newblock {\em Journal of Algebraic Statistics}, 7(1), 2016.

\bibitem{Amendola_Ranestad_Sturmfels_2018}
Carlos Améndola, Kristian Ranestad, and Bernd Sturmfels.
\newblock Algebraic identifiability of {Gaussian} mixtures.
\newblock {\em International Mathematics Research Notices},
  2018(21):6556--6580, November 2018.

\bibitem{Bafna_Hsieh_Kothari_Xu_2022}
Mitali Bafna, Tim Hsieh, Pravesh Kothari, and Jeff Xu.
\newblock Polynomial-time power-sum decomposition of polynomials.
\newblock {\em 2022 IEEE 63st Annual Symposium on Foundations of Computer
  Science (FOCS)}, pages 956--967, 2022.

\bibitem{hitchhiker_Bernardi_Carlini_Catalisano_Gimigliano_Oneto_2018}
Alessandra Bernardi, Enrico Carlini, Maria~Virginia Catalisano, Alessandro
  Gimigliano, and Alessandro Oneto.
\newblock The hitchhiker guide to: Secant varieties and tensor decomposition.
\newblock {\em Mathematics}, 6(12), 2018.

\bibitem{Taveira_Blomenhofer_GM6_2023}
Alexander~Taveira Blomenhofer.
\newblock Gaussian mixture identifiability from degree 6 moments, 2023.
\newblock arXiv:2307.03850.

\bibitem{Blomenhofer_Casarotti_Michalek_Oneto_2022}
Alexander~Taveira Blomenhofer, Alex Casarotti, Mateusz Michałek, and
  Alessandro Oneto.
\newblock Identifiability for mixtures of centered {G}aussians and sums of
  powers of quadratics.
\newblock {\em Bulletin of the London Mathematical Society}, 55(5):2407--2424,
  2023.

\bibitem{Casarotti_Postinghel_2023}
Alex Casarotti and Elisa Postinghel.
\newblock Waring identifiability for powers of forms via degenerations.
\newblock {\em Proceedings of the London Mathematical Society}, 128(1):e12579.

\bibitem{Catalisano_Geramita_Gimigliano_2003}
Maria~Virginia Catalisano, Anthony Geramita, and Alessandro Gimigliano.
\newblock Secant varieties of {G}rassmann varieties.
\newblock {\em Proceedings of the American Mathematical Society},
  133(3):633--642, 2005.

\bibitem{Casarotti_Freire_Massarenti}
Ageu~Barbosa Freire, Alex Casarotti, and Alex Massarenti.
\newblock On secant dimensions and identifiability of flag varieties.
\newblock {\em Journal of Pure and Applied Algebra}, 226(6):106969, 2022.

\bibitem{Freire_Massarenti_Rischter_2020}
Ageu~Barbosa Freire, Alex Massarenti, and Rick Rischter.
\newblock Projective aspects of the geometry of {L}agrangian {G}rassmannians
  and spinor varieties.
\newblock {\em Bulletin des Sciences Mathématiques}, 159:102829, 2020.

\bibitem{galgano2023identifiability}
Vincenzo Galgano.
\newblock Identifiability and singular locus of secant varieties to spinor
  varieties.
\newblock {\em https://arxiv.org/abs/2302.05295}, 2023.
\newblock arXiv:2302.05295.

\bibitem{Galgano_Staffolani_2022}
Vincenzo Galgano and Reynaldo Staffolani.
\newblock Identifiability and singular locus of secant varieties to
  {G}rassmannians, 2022.
\newblock arXiv:2212.05811.

\bibitem{Garg_Kayal_Saha_2020}
Ankit Garg, Neeraj Kayal, and Chandan Saha.
\newblock Learning sums of powers of low-degree polynomials in the
  non-degenerate case.
\newblock {\em 2020 IEEE 61st Annual Symposium on Foundations of Computer
  Science (FOCS)}, 2020.

\bibitem{hirschowitz1995polynomial}
A~Hirschowitz and J~Alexander.
\newblock Polynomial interpolation in several variables.
\newblock {\em Journal of Algebraic Geometry}, 4(4):201--222, 1995.

\bibitem{Baur_Draisma_DeGraaf_2007}
Jan~Draisma Karin~Baur and Willem~A. de~Graaf.
\newblock Secant dimensions of minimal orbits: Computations and conjectures.
\newblock {\em Experimental Mathematics}, 16(2):239--250, 2007.

\bibitem{LaMaRi2022}
Antonio {Laface}, Alex {Massarenti}, and Rick {Rischter}.
\newblock On secant defectiveness and identifiability of {S}egre-{V}eronese
  varieties.
\newblock {\em Rev. Mat. Iberoam.}, 38(5):1605,1635, 2022.

\bibitem{Massarenti_Mella_2022}
Alex Massarenti and Massimiliano Mella.
\newblock Bronowski's conjecture and the identifiability of projective
  varieties, 2022.
\newblock arXiv:2210.13524.

\bibitem{nenashev2017note}
Gleb Nenashev.
\newblock A note on {F}röberg's conjecture for forms of equal degrees.
\newblock {\em Comptes Rendus Mathematique}, 355(3):272--276, 2017.

\bibitem{Oeding_2011}
Luke Oeding.
\newblock Hyperdeterminants of polynomials.
\newblock {\em Advances in Mathematics}, 231, 07 2011.

\bibitem{Pearson_1900}
Karl Pearson.
\newblock Mathematical contributions to the theory of evolution. {VII}. {On}
  the correlation of characters not quantitatively measurable.
\newblock {\em Philosophical Transactions of the Royal Society of London.
  Series A, Containing Papers of a Mathematical or Physical Character}, 195:pp.
  1--47+405, 1900.

\bibitem{ramanujan1913problem}
S.~Ramanujan.
\newblock Problem 441.
\newblock {\em Journal of the Indian Mathematical Society}, 5:29, 1913.

\bibitem{Rischter_thesis_2017}
Rick Rischter.
\newblock {\em Projective and birational geometry of {G}rassmannians and other
  special varieties}.
\newblock PhD thesis, Instituto de Matemática Pura e Aplicada, 05 2017.

\bibitem{Te12}
A.~Terracini.
\newblock Sulle ${V}_{k}$ che rappresentano pi\`u di$\frac{k(k-1)}{2}$
  equazioni di {L}aplace linearmente indipendenti.
\newblock {\em Rend. Circ. Mat. Palermo}, 33:176–186, 1912.

\bibitem{Torrance_Vannieuwenhoven_2022}
Doug Torrance and Nick Vannieuwenhoven.
\newblock Almost all subgeneric third-order {C}how decompositions are
  identifiable.
\newblock {\em Annali di Matematica Pura ed Applicata (1923 -)}, 201, 06 2022.

\bibitem{Torrance_Vannieuwenhoven_2020}
Douglas~A Torrance and Nick Vannieuwenhoven.
\newblock All secant varieties of the {C}how variety are nondefective for
  cubics and quaternary forms.
\newblock {\em Transactions of the American Mathematical Society},
  374:4815--4838, 2020.

\end{thebibliography}
	\bibliographystyle{plain}
\end{document}